\newtheorem{lemma}{Lemma}[section]
\newtheorem{proposition}[lemma]{Proposition}
\newtheorem{theorem}[lemma]{Theorem}
\newtheorem{corollary}[lemma]{Corollary}
\newtheorem{predf}[lemma]{Definition} 
\newenvironment{df}{\begin{predf}\rm}{\end{predf}}
\newtheorem{preremark}[lemma]{Remark}  
\newenvironment{remark}{\begin{preremark}\rm}{\end{preremark}}
\newtheorem{preexample}[lemma]{Example}
\newenvironment{example}{\begin{preexample}\rm}{\end{preexample}}
\newtheorem{prenotation}{Notation}
\newtheorem{prequestion}[lemma]{Question}
\newenvironment{question}{\begin{prequestion}\rm}{\end{prequestion}}
\newtheorem{preanswer}[lemma]{Answer}
\theoremstyle{definition}
\newcommand{\Img}{\mathrm{Im }}
\newcommand{\CC}{\mathbb{C}}
\newcommand{\QQ}{\mathbb{Q}}
\newcommand{\ZZ}{\mathbb{Z}}
\newcommand{\Mm}{\mathcal{M}}
\def\ori{\mathfrak{o}}
\def\sing{\mathrm{sing}}
\def\Aut{\mathrm{Aut}}
\def\Top{\mathbf{Top}}
\def\QCDGA{\bQ\text{-}\mathbf{CDGA}}
\def\Apl{\mathcal{A}_{PL}}
\def\tildeApl{\tilde{A}_{PL}}
\def\Gr{\mathrm{Gr}}
\newcommand\mmnote[1]{}
\newcommand{\bL}{\mathbb{L}}
\newcommand{\bP}{\mathbb{P}}
\newcommand{\bQ}{\mathbb{Q}}
\newcommand{\bR}{\mathbb{R}}
\newcommand{\bZ}{\mathbb{Z}}
\newcommand\lra{\longrightarrow}
\newcommand\Th{\mathrm{Th}}
\newcommand\Ker{\operatorname*{Ker}}
\newcommand{\pb}{\ar@{}[dr]|{\mbox{\LARGE{$\lrcorner$}}}}
\def\CP{\mathbb{C}P}
\def\haut{\mathrm{hAut}_c}
\renewcommand{\geq}{\geqslant}
\renewcommand{\leq}{\leqslant}
\newcommand{\cA}{\mathcal{A}}
\newcommand{\cM}{\mathcal{M}}
\def\haut{\mathrm{haut}}
\def\SO{\mathrm{SO}}
   \def\MR#1{}
\title[Weight decompositions of Thom spaces in rational homotopy]{Weight decompositions of Thom spaces of vector bundles in rational homotopy}
\author{Urtzi Buijs}
\author{Federico Cantero Morán}
\author{Joana Cirici}
\thanks{The first author has been partially supported by the Ramón y Cajal MINECO programme, by the Junta de Andalucía grant 
FQM-213 and by the MINECO grant MTM2013-41768-P. The second author was funded by the Belgian Interuniversity Attraction Pole P07/18 and by the ``María de Maeztu'' MINECO grant MTM2014-0445 through the BGSMath. The third author was funded by the German Research Foundation (SPP 1786). F. Cantero and J. Cirici were partially supported by the MINECO grant MTM2013-42178-P}
\keywords{Thom spaces, rational homotopy theory, smoothing theory, Thom isomorphism theorem, mixed Hodge structures, weight decompositions, motivic Thom spaces}
\begin{document}
\begin{abstract}
Motivated by the theory of representability classes by submanifolds, we study the rational homotopy theory of Thom spaces of vector bundles.
We first give a Thom isomorphism at the level of rational homotopy, extending work of Felix-Oprea-Tanré by
removing hypothesis of nilpotency of the base and orientability of the bundle.
Then, we use the theory of weight decompositions in rational homotopy to give a criterion of 
representability of classes by submanifolds, generalising results of Papadima.
Along the way, we study issues of formality and give formulas for Massey products of Thom spaces. 
Lastly, we link the theory of weight decompositions with mixed Hodge theory
and apply our results to motivic Thom spaces.
\end{abstract}

\maketitle

\section{Introduction}
Let $\xi\colon E\to B$ be a vector bundle of rank $n$. After endowing it with a Riemannian metric, one can consider
its unit sphere bundle $S(\xi)\colon S(E)\to B$ and its unit disc bundle $D(\xi)\colon D(E)\to B$. The \emph{Thom space} of 
the bundle $\xi$, denoted $\Th(\xi)$, is the result of collapsing the subspace $S(E)$ of $D(E)$ to a point. Thom spaces are
fundamental in differential topology, as they parametrise cobordism classes of manifolds with certain structures, and in
algebraic topology, being used to define a rich class of generalised cohomology theories. 
In particular, Thom spaces play an important role in the theory of of representability of cohomology classes by submanifolds.
As noted by Papadima in \cite{Papadima}, in certain cases, the question of representability of a cohomology class by a submanifolds
may be translated to a question in rational homotopy theory.

In the sixties, Sullivan and Quillen developed a formalism
that allows to associate to each
topological space $X$ another space $X_\bQ$ (all of whose homotopy
groups are rational vector spaces) and a map $X\to X_{\bQ}$ inducing isomorphisms on rational cohomology. This new 
space captures all the rational information of $X$, the \emph{rational homotopy type of $X$}. 
Sullivan further constructed a functor $\cA_{PL}$ from topological spaces to commutative differential graded algebras (cdga's)
and showed that, when restricted to nilpotent spaces,
this functor is an equivalence after inverting rational 
homology equivalences of spaces and quasi-isomorphisms of cdga's (\cite{Sullivan}, see also \cite{FHT}).
A space $X$ is said to be \emph{formal} if there is a string of quasi-isomorphisms from
$\cA_{PL}(X)$ to the cdga with trivial differential given by the rational cohomology $H^*(X,\bQ)$ of $X$.
A space is called \emph{intrinsically formal} if it is rationally homotopy equivalent
to any other simply connected space with the same rational cohomology ring.

Since Thom spaces of vector bundles of rank $n$ are simply connected as long as $n>1$,
the rational localisation theory of Quillen and Sullivan applies, 
even if the base of the vector bundle is not nilpotent or the vector bundle is non-orientable.
The rational homotopy type of Thom spaces of universal vector bundles over $BGL(\bR^n)$ was studied by Burlet \cite{Burlet}. 
Later, Papadima \cite{Papadima} showed that Thom spaces of orientable vector bundles over classifying spaces of closed connected subgroups of $GL(\bR^n)$ are formal.
As an application, he studied the existence of maps $\phi\colon M\to \Th(\gamma_{d,\infty})_\bQ$ from $M$ 
to the rationalisation of $\Th(\gamma_{d,\infty})$ such that $\phi^*(u)=c$, where $u$ denotes the Thom class.
Then, he used a result on derationalisation of maps of topological spaces whose target is formal,
to establish the conditions for a multiple of a given cohomology class $c$ of a manifold $M$ to be representable by a submanifold.

In Section $\ref{S2}$ of this paper, we describe the rational homotopy type of the Thom space of any vector bundle $\xi:E\lra B$
in terms of the rational homotopy type of $B$ and the Euler class of $\xi$. 
This allows us to formulate, in Theorem $\ref{mainthmThom}$, a Thom isomorphism theorem at the level of rational homotopy.
This was recently obtained  by Félix, Oprea and Tanré \cite{FOT16} in the case of oriented vector bundles with nilpotent base. 
Our more conceptual approach allows us to remove these hypotheses. In Theorem $\ref{mainthmThomtwisted}$ we give a 
version for non-oriented vector bundles with twisted coefficients.

Section $\ref{S3}$ is devoted to the theory of weight decompositions.
We say that a space $X$ \textit{admits a (positive) weight decomposition} if there
exists a certain (positive) bigrading on a cdga $\Mm$ compatible with products and differentials
(see Definition $\ref{defweightdec}$ for details),
together with a quasi-isomorphism $\cM\lra \cA_{PL}(X)$.
The formality of $X$ can be reformulated in terms of the existence of a
weight decomposition with pure weights. Therefore one can think of weight
decompositions as an intermediate property towards formality.
Furthermore, as indicated to us by Yves Felix, the existence of positive weight decompositions turns out to be sufficient
to derationalise: given CW-complexes $X$ and $Y$ satisfying certain conditions, every map $X_\bQ\to Y_\bQ$ can be lifted to 
a map $X\to Y$ provided $Y$ admits a positive weight decomposition (Theorem \ref{derationalise}). 
Together with our description of the rational homotopy type of Thom spaces, this leads to a criterion of representability of 
cohomology classes by $\theta$-submanifolds, where $\theta\colon B\to BSO(q)$ is a fibration,
in terms of the existence of positive weight decompositions, thus extending Papadima's work on this problem (see Theorem $\ref{representability}$).

Lastly, in Section $\ref{S4}$ we study algebraic vector bundles.
The rational cohomology of every complex algebraic variety $X$ carries functorial mixed Hodge structures.
In fact, these descend to  mixed Hodge structures on the rational homotopy type of $X$.
In Theorem $\ref{AlgVarsWeightDec}$, we show that mixed Hodge theory leads to functorial weight decomposition.
Furthermore, for smooth quasi-projective varieties, such weight decompoisions are always positive.
Let $\xi:E\to B$ is be
complex vector bundle over a smooth complex variety $B$.
The collapsing construction needed to define a Thom space is not algebraic, 
and therefore $\Th(\xi)$ is not, in general, a complex algebraic variety. 
Nonetheless, it is motivic space, and we study it as such.
In Theorem \ref{MHSenelmodelo}, we describe the mixed Hodge structures on the rational homotopy type of motivic Thom spaces.
As a consequence, we are able to extend Papadima's 
result whenever
$\theta$ is the classifying map of the underlying real vector bundle of an algebraic vector bundle over an algebraic variety.

\subsection*{Acknowledgements} We thank Yves F\'elix for answering our many questions and for telling us about spaces with weight decompositions. Thanks also to Vicente Navarro for his ideas on mixed Hodge theory and useful comments.
We are also grateful to Andrew Baker, Pascal Lambrechts, Luc Menichi and Jean-Claude Thomas for their feedback at the early stages of this project.

\section{A Thom isomorphism theorem in rational homotopy}\label{S2}
In this section, we study the rational homotopy type of Thom spaces. 
After recalling some preliminaries on Thom's isomorphism in cohomology, 
we first study the simpler case of vector bundles of odd rank.
In this case, we show that Thom spaces are formal.
For the even rank case, we give a model of the Thom space from a model of the base and the Euler class of the vector bundle.
This leads to a Thom isomorphism theorem at the level of rational homotopy.
Then, we extend our study to the case of non-orientable vector bundles.

\subsection{Preliminaries: Thom isomorphism in cohomology}
The cohomology with coefficients in a ring $R$ of the Thom space of an oriented vector bundle $\xi:E\to B$ is well-understood: by excision,
we have that 
$$\tilde{H}^*(\Th(\xi);R)\cong H^*(D(E),S(E);R)$$
and as $D(\xi)\colon D(E)\to B$ is a homotopy equivalence, $H^*(D(E);R)\cong H^*(B;R)$.
The relative cup product then gives a pairing
$$H^*(B;R)\otimes \tilde{H}^*(\Th(\xi);R)\to \tilde{H}^*(\Th(\xi);R).$$
\begin{theorem}[Thom isomorphism]
If $\xi$ is an oriented vector bundle of rank $n$, then there is a class $u\in H^n(\Th(\xi);R)$, called the \emph{Thom class}, such that the homomorphism 
\[-\cup u\colon H^*(B;R)\to \tilde{H}^{*+n}(\Th(\xi);R)\]
given by relative cup product is an isomorphism of graded $R$-modules.
\end{theorem}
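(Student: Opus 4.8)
The plan is to construct the Thom class geometrically and then apply a spectral sequence or Mayer–Vietoris-type comparison argument; I will sketch the classical approach via the Leray–Hirsch theorem. First I would reduce to the statement over a point, i.e., to the fact that $\tilde H^*(S^n;R)$ is free of rank one over $R$ concentrated in degree $n$, which identifies a preferred generator $u_0$ once an orientation of $\bR^n$ is fixed. The orientation of $\xi$ is precisely a coherent, locally constant choice of such generators on the fibers $(D_b, S_b)\cong (\bR^n,\bR^n\smallsetminus\{0\})$; the content of the theorem is that these patch to a global class $u\in H^n(D(E),S(E);R)\cong\tilde H^n(\Th(\xi);R)$ restricting to $u_0$ on each fiber.

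The key steps, in order: (1) Establish existence and uniqueness of $u$ by an induction over an open cover of $B$ trivialising $\xi$, using Mayer–Vietoris for the pairs; over a contractible trivialising open set $U$ one has $H^*(D(E)|_U, S(E)|_U;R)\cong H^*(U;R)\otimes \tilde H^*(S^n;R)$, and the local Thom classes agree on overlaps by the orientation condition, so they glue. Uniqueness follows because any two candidates differ by a class restricting to zero on every fiber, which the same Mayer–Vietoris induction forces to vanish. (2) Having $u$, define $\Phi = -\cup u\colon H^*(B;R)\to\tilde H^{*+n}(\Th(\xi);R)$ via the relative cup product, using the homotopy equivalence $D(E)\simeq B$ to identify $H^*(D(E);R)\cong H^*(B;R)$. (3) Show $\Phi$ is an isomorphism, again by a Mayer–Vietoris five-lemma induction over a finite good cover of $B$ (and a colimit argument for the infinite case): over each trivialising $U$ the map $\Phi_U$ is, up to the identification above, the map $H^*(U;R)\to H^*(U;R)\otimes\tilde H^*(S^n;R)$, $x\mapsto x\otimes u_0$, which is an isomorphism since $u_0$ generates; the inductive step glues these via the five lemma applied to the Mayer–Vietoris ladder comparing the two cohomology theories $H^*(-;R)$ and $\tilde H^{*+n}(\Th(\xi|_{-});R)$.

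Alternatively, one can phrase step (3) in one stroke via the Leray–Hirsch theorem: the class $u$ restricts to a fiberwise generator of $\tilde H^*(S^n;R)$ as a free $R$-module, so $\tilde H^*(\Th(\xi);R)$ is a free $H^*(B;R)$-module on the single generator $u$, which is exactly the assertion that $\Phi$ is an isomorphism.

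**The main obstacle** is the non-simply-connected, possibly infinite base: the gluing must genuinely use the orientability hypothesis (for $\xi$ non-orientable the would-be Thom class lives in cohomology twisted by the orientation local system $w_1(\xi)$, which is the subject of the later twisted version), and for a base that is not a finite CW-complex one must upgrade the finite Mayer–Vietoris induction to a statement about the cohomology of a colimit, checking the relevant $\lim^1$ terms vanish — or else simply invoke that $H^*(-;R)$ and $\tilde H^{*+n}(\Th(\xi|_{-});R)$ are both cohomology theories on spaces over $B$ satisfying Mayer–Vietoris and the colimit axiom, and $\Phi$ is a natural transformation that is an isomorphism on the point, hence on everything by a standard uniqueness-of-cohomology-theories argument.
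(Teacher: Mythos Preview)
Your sketch is the standard classical argument and is correct as far as it goes. However, there is nothing to compare it against: the paper does not prove this statement. It is stated in the preliminaries section as a well-known classical result (``The cohomology with coefficients in a ring $R$ of the Thom space of an oriented vector bundle $\xi:E\to B$ is well-understood\ldots'') and is used as input, not output. The paper's own contribution is the rational-homotopy refinement (Theorem~\ref{mainthmThom}), which \emph{invokes} the classical Thom isomorphism to conclude that the chain-level map $g\colon\cA[e]\to\Apl(D(E),S(E))$ is a quasi-isomorphism.

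So your proposal is fine as a proof of the classical theorem, but be aware that you are supplying a proof the paper deliberately omits. If this is for an expository write-up that is perfectly reasonable; if you are trying to reconstruct the paper's arguments, the relevant work happens later, at the level of cdga models.
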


The composite of the zero section of $\xi$ and the quotient map $D(E)\to \Th(\xi)$ 
is the \emph{canonical inclusion}, and will be denoted by $\iota$. One has that 
$\iota^*(u)=e$, the Euler class of $\xi$. Using the naturality of the relative 
cup product for $\iota$, it follows that $\iota^*(a\cup u) = a\cup e$ and that 
\[(a\cup u)\cdot (b\cup u) = (a\cdot b \cdot e)\cup u.\] Therefore, the Thom 
isomorphism theorem gives a complete description of the cohomology ring of 
the Thom space of any orientable vector bundle.

If $\xi:E\lra B$ is a non-orientable vector bundle, there is a twisted version of Thom's isomorphism by 
considering the orientation bundle $\ori$ and noting that
 $\ori\otimes\ori$ is the trivial line bundle (see for instance Theorem 7.10 of \cite{BottTu}). 
\begin{theorem}[Thom isomorphism with twisted coefficients]
If $\xi$ is a vector bundle of rank $n$, then there is a class $u\in H^n(D(E),S(E);\ori)$, such that the homomorphism 
\[-\cup u\colon H^*(B;\ori)\to \tilde{H}^{*+n}(\Th(\xi))\]
given by relative cup product is an isomorphism of graded $R$-modules. In this case, 
\[(a\cup u)\cdot (b\cup u) = (a\cdot(b\cdot e))\cup u.\]
\end{theorem}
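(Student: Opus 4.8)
The plan is to deduce the statement from the ordinary (oriented) Thom isomorphism above by systematically replacing the constant coefficients by the orientation local system. Throughout I would write $\ori$ also for the pullback of the orientation bundle along the homotopy equivalence $D(\xi)\colon D(E)\lra B$, and fix once and for all the canonical isomorphism $\ori\otimes\ori\cong R$ (it is independent of any local trivialisation, since $(-v)\otimes(-v)=v\otimes v$); this is precisely what makes every twisted cup product occurring below land in untwisted cohomology.

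First I would record the fibrewise cohomology. The map $(D(E),S(E))\to B$ is a fibre bundle with fibre $(D^n,S^{n-1})$, so the local system $\cH^q:=\underline{H}^q(D^n,S^{n-1};R)$ on $B$ vanishes for $q\neq n$ and, for $q=n$, is by definition a model for $\ori\otimes R$. Then the (relative) Serre spectral sequence of this pair of bundles, with coefficients in $\pi^{*}\cL$ for an arbitrary local system $\cL$ on $B$, reads
\[
E_2^{p,q}=H^p(B;\cH^q\otimes\cL)\ \Longrightarrow\ H^{p+q}\bigl(D(E),S(E);\pi^{*}\cL\bigr).
\]
Taking $\cL=\ori$ kills $\cH^q\otimes\cL$ except in degree $n$, where it is $\ori\otimes\ori\cong R$; the spectral sequence degenerates and gives $H^{p+n}(D(E),S(E);\ori)\cong H^p(B;R)$. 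In particular there is a twisted Thom class $u\in H^n(D(E),S(E);\ori)$ restricting on each fibre over $b$ to the tautological generator of $H^n(D^n,S^{n-1};\ori_b)=H^n(D^n,S^{n-1})\otimes\ori_b$; this determines $u$ uniquely on each connected component. Taking instead $\cL=R$ yields $\tilde H^{p+n}(\Th(\xi);R)\cong H^p(B;\ori)$, so both sides of the claimed isomorphism have the right size.

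Next I would identify the isomorphism with $-\cup u$. The composite $H^*(B;\ori)\xrightarrow{\pi^{*}}H^*(D(E);\ori)\xrightarrow{-\cup u}H^{*+n}(D(E),S(E);R)$ is a morphism of Serre spectral sequences (naturality of the cup-product pairing of local-coefficient spectral sequences), inducing on $E_2$ multiplication by the class of $u$ in $E_2^{0,n}=H^0(B;R)$, i.e.\ the identity up to sign; by the comparison theorem it is an isomorphism. An equivalent, more hands-on route: cover $B$ by opens over which $\xi$ is orientable, where $\ori$ is trivial and one is looking at the ordinary oriented Thom isomorphism; the local Thom classes then patch to $u$ because the gluing obstruction is a cocycle valued in $\ori$, and Mayer--Vietoris for the pairs $(D(E),S(E))$ together with the five lemma assembles the global statement, a colimit argument handling bases not of finite type.

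Finally, the multiplicativity formula is formal and runs exactly as in the oriented case discussed after the first Thom isomorphism, now with twisted coefficients. Let $\iota\colon B\to\Th(\xi)$ be the canonical inclusion; then $\iota^{*}u=e\in H^n(B;\ori)$ essentially by definition of the Euler class, and naturality of the relative cup product for $\iota$ gives $u\cdot u=e\cup u$, whence
\[
(a\cup u)\cdot(b\cup u)=(a\cdot b)\cdot(u\cdot u)=(a\cdot b)\cdot(e\cup u)=\bigl(a\cdot(b\cdot e)\bigr)\cup u.
\]
The only genuinely delicate points are the bookkeeping of the trivialisation $\ori\otimes\ori\cong R$ (with the attendant Koszul signs when $n$ is odd, which in this formula are suppressed as in the oriented case) and the naturality of the local-coefficient spectral-sequence pairing used in the comparison step; beyond that this is the classical argument of \cite{BottTu}.
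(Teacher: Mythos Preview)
Your argument is sound: the Serre spectral sequence with local coefficients (or the Mayer--Vietoris patching alternative you sketch) is exactly the standard route to the twisted Thom isomorphism, and the multiplicativity formula follows from $u\cdot u=\iota^*(u)\cup u=e\cup u$ just as you wrote, with the Koszul signs correctly flagged.

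There is, however, nothing to compare against: the paper does not supply its own proof of this statement. It is quoted in the preliminaries as a classical fact, with a pointer to Theorem~7.10 of \cite{BottTu}, and is then \emph{used} later (e.g.\ in Proposition~\ref{mainpropThomtwisted}) rather than established. So your write-up is not an alternative to the paper's proof but a self-contained justification of a result the paper takes as input. If anything, your spectral-sequence sketch is a bit more streamlined than the \v{C}ech/Mayer--Vietoris treatment in Bott--Tu; the only point deserving a word more of care is the assertion that $-\cup u$ realises the edge isomorphism, which amounts to checking that the class $u$ hits the unit in $E_2^{0,n}\cong H^0(B;R)$ under the multiplicative pairing of spectral sequences---but that is precisely the fibrewise-generator condition you imposed on $u$.
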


\subsection{Vector bundles of odd rank}
We begin by studying the simpler case of vector bundles of odd rank and show that the resulting Thom spaces are formal.

In order to understand the homotopy type of Thom spaces, we divide the collapsing in the definition of a Thom space into two steps: First, for a vector bundle $\xi\colon E\to B$ of odd rank $n$, one can consider its fibrewise one-point compactification $\dot{\xi}\colon \dot{E}\to B$, which is a $S^n$-bundle, that can also be obtained as the fibrewise unreduced suspension of the unit sphere bundle $\xi\colon S(E)\to B$. This bundle comes with a canonical section $s\colon B\to \dot{E}$, namely the section that sends each point $p$ in $B$ to the point at infinity in the fibre over $p$. Then, the Thom space of $\xi$ is homeomorphic to the result of collapsing the image of $s$ inside $\dot{E}$:
\[B\overset{s}{\lra} \dot{E}\lra \Th(\xi).\]

Let $\haut_*(S^n)$ be the monoid of pointed homotopy automorphisms. Isomorphism classes of pairs $(\eta,s)$ where $\eta$ is a $S^n$-fibration over $B$ and $s$ is a section of $\eta$ are classified by homotopy classes of maps $B\to B\haut_*(S^n)$. If $n$ is odd, then $\haut_*(S^n)$ has two rationally contractible components, corresponding to orientable and unorientable bundles. Therefore, in this case, the map $s\colon B\to \dot{E}$ is rationally homotopy equivalent to the map $s'\colon B\to \dot{P}$, where $\psi\colon P\cong L\times \bR^{n-1}\to B$ is the Whitney sum of some line bundle $\ell\colon L\to B$ and a trivial vector bundle of rank $n-1$. But then we have that $\Th(\xi)\cong \Th(\psi) \simeq \Sigma^{n-1}\Th(\ell)$. Since suspensions are formal, we have that if $\ell$ is orientable or $n\geq 3$, then $\Th(\xi)$ is formal, and therefore the cdga $H^*(\Th(\xi);\bQ)$ with trivial differential is a cdga model of $\Th(\xi)$. In the remaining case $\Th(\xi)$ fails to be simply-connected, even nilpotent in general, so the localisation theory does not apply in this case.

\subsection{Oriented vector bundles of even rank}
Let $Y\subset X$ be a cofibration. Then it induces a surjection 
$\varphi:\Apl(X)\to \Apl(Y)$ and we may use the model of the cofibre to write
$$\Apl(X,Y)\simeq \Ker(\varphi).$$ 
Denote by $\iota^*:\Apl(X,Y)\hookrightarrow \Apl(X)$ the resulting injective morphism.
Since the kernel is an ideal, we have a relative cup product 
at the level of cdga's.

\begin{lemma}\label{lemma:kernel}
The assignment $(x,y)\mapsto x\cdot \iota^*(y)$ defines a product
$$\cup:\Apl(X)\otimes \Apl(X,Y)\lra \Apl(X,Y)$$
which induces the relative cup product in cohomology.
\end{lemma}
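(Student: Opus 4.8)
The plan is to verify that the formula $(x,y)\mapsto x\cdot\iota^*(y)$ lands in $\Ker(\varphi)$, that it is a chain map (so it descends to cohomology), and that under the identification $H^*(\Apl(X,Y))\cong H^*(X,Y)$ it agrees with the classical relative cup product. First I would observe that $\iota^*\colon\Apl(X,Y)\to\Apl(X)$ is, by construction, the inclusion of the ideal $\Ker(\varphi)$, so for $x\in\Apl(X)$ and $y\in\Apl(X,Y)$ we have $x\cdot\iota^*(y)\in\Ker(\varphi)$ because $\Ker(\varphi)$ is an ideal of $\Apl(X)$; hence $x\cdot\iota^*(y)$ defines a unique element of $\Apl(X,Y)$, and the pairing $\cup$ is well-defined. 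That it is a morphism of complexes is the Leibniz rule: $d(x\cdot\iota^*(y)) = dx\cdot\iota^*(y)\pm x\cdot\iota^*(dy)$, using that $\iota^*$ commutes with $d$; this shows $\cup$ is a map of $\Apl(X)$-modules compatible with differentials, so it induces a pairing $H^*(X)\otimes H^*(\Apl(X,Y))\to H^*(\Apl(X,Y))$.

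Next I would identify this induced pairing with the topological relative cup product. The cleanest route is naturality: the short exact sequence $0\to\Apl(X,Y)\to\Apl(X)\xrightarrow{\varphi}\Apl(Y)\to 0$ computes the long exact sequence of the pair $(X,Y)$, and the algebraic pairing $\cup$ is visibly compatible with the absolute cup product on $\Apl(X)$ under $\iota^*$, i.e.\ $\iota^*(x\cup y) = x\cdot\iota^*(y)$ in $\Apl(X)$. Since the relative cup product in singular cohomology is characterised (say, via the cochain-level formula on the subcomplex of relative cochains, or via its compatibility with the absolute cup product and the connecting maps in the long exact sequence) by exactly the same property, and since $\Apl$ is naturally quasi-isomorphic to singular cochains as an $E_\infty$-algebra — in particular the induced isomorphism $H^*(\Apl(-))\cong H^*(-;\bQ)$ is multiplicative and natural for the cofibration $Y\hookrightarrow X$ — the two pairings agree on cohomology.

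The one point that needs care, and which I expect to be the main (modest) obstacle, is pinning down the comparison isomorphism $H^*(\Apl(X,Y))\cong H^*(X,Y;\bQ)$ so that it genuinely intertwines the module structures: one wants a zig-zag of quasi-isomorphisms relating the relative cochain complex $C^*(X,Y;\bQ)$ and $\Apl(X,Y) = \Ker(\varphi)$ that is simultaneously a zig-zag of $C^*(X;\bQ)$- (resp.\ $\Apl(X)$-) module maps. This follows from the classical fact that the Sullivan--de Rham quasi-isomorphism $C^*(-;\bQ)\to\Apl(-)$ is natural and multiplicative up to coherent homotopy, restricted to the cofiber sequence, but writing it out requires choosing the model of $\Apl(X,Y)$ as $\Ker(\varphi)$ for a genuine surjection $\varphi$ (which the hypothesis that $Y\subset X$ is a cofibration guarantees, since then $\Apl(X)\to\Apl(Y)$ is surjective). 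With this model in hand the module structures on both sides are strict, the comparison map is a strict module map on the nose over $\Apl(X)$, and passing to cohomology yields the claim. I would present the argument by first establishing the absolute statement $\iota^*(x\cup y)=x\cdot\iota^*(y)$, then invoking naturality of Sullivan--de Rham for the pair to transport it to the topological side.
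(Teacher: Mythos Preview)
Your proposal is correct and its core step---that $\Ker(\varphi)$ is an ideal, so $x\cdot\iota^*(y)\in\Apl(X,Y)$---is exactly the paper's proof, which consists of that single observation and nothing more. The additional work you outline (Leibniz, the comparison with singular cochains, module compatibility of the Sullivan--de Rham map) is all sound but goes well beyond what the paper supplies; the authors evidently regard ``induces the relative cup product in cohomology'' as immediate once well-definedness is checked, since $\Apl$ computes rational cohomology multiplicatively and the relative product is characterised by its compatibility with the absolute one under $\iota^*$.
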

\begin{proof}
It suffices to note that for every $x\in \Apl(X)$ and $y\in \Apl(X,Y)\subset\Apl(X)$, we have 
$x\cdot \iota^*(y)\in \Apl(X,Y)\subset \Apl(X)$.
\end{proof}

Let $\xi:E\lra B$ be an oriented vector bundle of even rank $n$.
Denote by
$$\tilde \cA_{PL}(\Th(\xi)):=\cA_{PL}(\Th(\xi),\ast)\simeq \cA_{PL}(D(E),S(E))$$
the algebra of piece-wise linear forms of the inclusion $\ast\lra \Th(\xi)$.
The cohomology of $\tilde \cA_{PL}(\Th(\xi))$ computes the reduced cohomology of the Thom space.
To get the model of $\Th(\xi)$ one just needs to add $\bQ$ in degree 0:
$$\cA_{PL}(\Th(\xi))^0= \bQ\text{ and }
\cA_{PL}(\Th(\xi))^k= \tilde\cA_{PL}(\Th(\xi))^k\text{ for }k>0.$$

By Lemma \ref{lemma:kernel}, we have a relative cup product at the level of cdga's
$$\cup\colon \Apl(D(E))\otimes \Apl(D(E),S(E))\lra \Apl(D(E),S(E)).$$

\begin{lemma}\label{productemagic}Let $u\in \cA_{PL}(D(E),S(E))$ be a representative of the Thom class 
of $\xi$ and let $e=\iota^*(u)$.
Then for every $x,y\in\cA_{PL}(D(E))$ we have
$$\iota^*(x\cup u)=x\cdot e\text{ and }(x\cup u)\cdot (y\cup u)=(x\cdot y\cdot e)\cup u.$$
\end{lemma}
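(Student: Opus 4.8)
The plan is to observe that both identities are purely formal consequences of the algebraic set-up of Lemma \ref{lemma:kernel}, requiring no homotopy-theoretic input beyond it. Write $A := \cA_{PL}(D(E))$ and identify $\cA_{PL}(D(E),S(E))$ with the ideal $\Ker(\varphi)\subseteq A$, so that $\iota^*\colon \cA_{PL}(D(E),S(E))\hookrightarrow A$ is simply the inclusion; in particular it is an injective morphism of (non-unital) cdga's once $\Ker(\varphi)$ carries the product inherited from $A$. By the very definition of $\cup$ in Lemma \ref{lemma:kernel}, for all $x\in A$ and $z\in\cA_{PL}(D(E),S(E))$ one has $\iota^*(x\cup z)=x\cdot\iota^*(z)$, where the right-hand side is the product of $A$.

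First I would deduce the identity $\iota^*(x\cup u)=x\cdot e$: this is immediate from the relation $\iota^*(x\cup z)=x\cdot\iota^*(z)$ applied to $z=u$, since $e=\iota^*(u)$ by definition. For the second identity $(x\cup u)\cdot(y\cup u)=(x\cdot y\cdot e)\cup u$, I would verify equality after applying the injective inclusion $\iota^*$. Multiplicativity of $\iota^*$ on $\Ker(\varphi)$ takes the left-hand side to $\iota^*(x\cup u)\cdot\iota^*(y\cup u)=(x\cdot e)\cdot(y\cdot e)$ in $A$, while the relation for $\cup$ takes the right-hand side to $(x\cdot y\cdot e)\cdot\iota^*(u)=x\cdot y\cdot e\cdot e$. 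It then remains to compare $(x\cdot e)\cdot(y\cdot e)$ with $x\cdot y\cdot e\cdot e$ inside the graded-commutative algebra $A$, and to conclude by injectivity of $\iota^*$.

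The only point requiring care is the Koszul sign arising when commuting $e$ past $y$ in $(x\cdot e)\cdot(y\cdot e)=x\cdot e\cdot y\cdot e$: this sign is $(-1)^{n|y|}$, and it disappears precisely because the rank $n$ is even, which gives $(x\cdot e)\cdot(y\cdot e)=x\cdot y\cdot e\cdot e$ as needed. I therefore do not expect any genuine obstacle; the lemma is essentially a tautology once the relative cup product is modelled by the ideal $\Ker(\varphi)$, and the even-rank hypothesis intervenes only to clear this sign. (For odd rank or in the twisted setting one would instead land on the parenthesised expression $(x\cdot(y\cdot e))\cup u$, in agreement with the cohomology-level formulas recalled earlier.)
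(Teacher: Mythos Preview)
Your proposal is correct and follows essentially the same approach as the paper: both arguments unwind the definition $x\cup u = x\cdot\iota^*(u)$ from Lemma~\ref{lemma:kernel} and reduce the second identity to the equality $(x\cdot e)\cdot(y\cdot e)=x\cdot y\cdot e\cdot e$ in $\cA_{PL}(D(E))$, which holds by graded commutativity since $e$ has even degree. Your phrasing via injectivity of $\iota^*$ is the same computation as the paper's direct one, since $\iota^*$ is literally the inclusion of the ideal; your explicit discussion of the Koszul sign is a welcome clarification.
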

\begin{proof}
The first equality is straightforward using the definition of $\iota^*$ and $\cup$. Indeed, we have
$$\iota^*(x\cup u)=x\cdot \iota^*(u) = x\cdot e.$$
Let us prove the second equality, using Lemma \ref{lemma:kernel}. We have
\begin{align*}
(x\cdot y\cdot e)\cup u=(x\cdot y\cdot e)\cdot \iota^*(u)= x\cdot y\cdot e\cdot e = (x\cdot e)\cdot (x\cdot e) = \\ =(x\cdot \iota^*(u))\cdot (y\cdot \iota^*(u)) 
= (x\cup u)\cdot (y\cup u).
\end{align*}
where we used the fact that $e$ has even degree. 
\end{proof}

Before stating the main result of this section, let us fix some notation.
Let $n\geq 0$ be an integer. The \textit{$n$-th suspension} of a graded vector space
$\cA=\bigoplus \cA^k$ is the graded vector space $s^n\cA$ defined by 
$s^n\cA^k:=\cA^{k-n}$. If $a\in \cA$ is an element of $\cA$,
we will denote by $w_a:=s^na\in s^n\cA$ its $n$-th suspension.

\begin{df}\label{defM} Let $\cA$ be a cdga and let $e\in \cA^n$, with $n$ even. Define $\cA[e]$ as 
the graded vector space given by the $n$-th suspension $s^n\cA$ of $\cA$. Define a differential $d:\cA[e]^k\to \cA[e]^{k+1}$ and a product $\mu:\cA[e]^k\otimes \cA[e]^l\to \cA[e]^{k+l}$ by setting $d(w_x) = w_{dx}$ and $\mu(w_x,w_y) = w_{exy}$ respectively.
\end{df}

\begin{proposition}\label{mainpropThom}
Let $\xi\colon E\to B$ be an oriented vector bundle of even rank $n$. Let $u\in \Apl(D(E),S(E))$ be a representative of the Thom class, so that $e:= \iota^*(u)$ is a representative of the Euler class. Denote by $\cA$ the cdga $\Apl(D(E))$.
\begin{enumerate}
\item The morphism $g\colon \cA[e]\to \Apl(D(E),S(E))$ given by $g(w_x) = x\cup u$  is a quasi-isomorphism.
\item The product $\cup \colon \cA\otimes \cA[e]\lra \cA[e]$ given by 
$(x, w_y)\mapsto w_{xy}$ is a model of the relative cup product, that is, the following diagram commutes:
\[
\xymatrix{
\cA\otimes \cA[e]\ar[d]^{f\otimes g}\ar[r]^-{\cup}&\cA[e]\ar[d]^g\\
\cA\otimes \cA_{PL}(D(E),S(E))\ar[r]^-{\cup}&\cA_{PL}(D(E),S(E))
}
\]
\end{enumerate}
\end{proposition}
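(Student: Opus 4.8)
The plan is to verify the two claims of Proposition~\ref{mainpropThom} by reducing everything to the algebraic identities already established in Lemma~\ref{productemagic}, together with the Thom isomorphism theorem in cohomology. First I would check that $g$ is a morphism of cdga's. Compatibility with differentials is immediate: $g(d w_x)=g(w_{dx})=dx\cup u=d(x\cup u)=d\,g(w_x)$, where the middle equality uses that $u$ is a cocycle in $\Apl(D(E),S(E))$ (so $d(x\cup u)=dx\cup u\pm x\cup du=dx\cup u$). Compatibility with products is exactly the second identity of Lemma~\ref{productemagic}: $g(\mu(w_x,w_y))=g(w_{exy})=(e\cdot x\cdot y)\cup u=(x\cup u)\cdot(y\cup u)=g(w_x)\cdot g(w_y)$. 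Here the fact that $n$ is even is what makes $\cA[e]$ graded-commutative and makes these signs work out, so I would flag the parity hypothesis explicitly at this point.

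Next I would prove that $g$ is a quasi-isomorphism. The key observation is that, up to the shift $s^n$, the induced map on cohomology is precisely the Thom isomorphism $-\cup u\colon H^*(B;\bQ)\to \tilde H^{*+n}(\Th(\xi);\bQ)$ of the Thom isomorphism theorem. More precisely, $H^*(\cA[e])=s^n H^*(\cA)=s^n H^*(D(E);\bQ)\cong s^n H^*(B;\bQ)$ since $D(\xi)$ is a homotopy equivalence, while $H^*(\Apl(D(E),S(E)))=\tilde H^*(\Th(\xi);\bQ)$ by excision and the identification recalled before the statement. Under these identifications $H^*(g)$ sends the class of $w_x$ to $[x]\cup[u]$, which is the Thom isomorphism, hence an isomorphism in every degree. (One small point to be careful about: $[e]=\iota^*[u]$ really is the Euler class, which I would note follows from $\iota^*(x\cup u)=x\cdot e$ applied with $x=1$, and the naturality already discussed in the preliminaries.) This gives claim (1).

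For claim (2) I would simply chase the square. Starting from $(x,w_y)$ in the top-left corner, going right then down gives $g(\cup(x,w_y))=g(w_{xy})=(xy)\cup u$; going down then right gives $\cup(f(x),g(w_y))=\cup(f(x),y\cup u)=f(x)\cdot(y\cup u)$. After identifying $\cA$ with $\Apl(D(E))$ via $f$ (here $f$ is presumably the identity, or at least the canonical identification, on $\cA=\Apl(D(E))$), both expressions equal $x\cdot y\cdot \iota^*(u)$ rearranged, and commutativity follows from associativity and commutativity of the product in $\Apl(D(E))$ together with the definition of $\cup$ on $\Apl(D(E))\otimes\Apl(D(E),S(E))$ as in Lemma~\ref{lemma:kernel}; no new computation beyond Lemma~\ref{productemagic} is needed.

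I expect the only genuine subtlety — rather than an obstacle — to be bookkeeping of signs and the role of the parity of $n$: the product $\mu$ on $\cA[e]$ must be graded-commutative for $g$ to be an algebra map, and this is exactly where "$n$ even" enters, mirroring its use in the proof of Lemma~\ref{productemagic}. Everything else is formal: the hard analytic input (excision, homotopy invariance of $D(\xi)$, and the Thom isomorphism theorem) has already been quoted, so the proposition is essentially a repackaging of Lemma~\ref{productemagic} plus the cohomological Thom isomorphism.
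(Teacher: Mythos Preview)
Your proposal is correct and follows essentially the same route as the paper: verify that $g$ is a chain map using $du=0$, verify multiplicativity via the identity $(x\cup u)\cdot(y\cup u)=(xye)\cup u$ from Lemma~\ref{productemagic}, and then invoke the cohomological Thom isomorphism to conclude that $g$ is a quasi-isomorphism; the paper dispatches part~(2) in one line as ``immediate from the first statement and Lemma~\ref{lemma:kernel}'', which is exactly the diagram chase you spell out. Your more explicit handling of the cohomology identifications and of the parity-of-$n$ bookkeeping is fine and does not deviate from the paper's argument.
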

\begin{proof}
The second statement is immediate from the first statement and Lemma \ref{lemma:kernel}. 
For the first statement, since $d{u'}=0$ it is clear that $g$ is compatible with differentials. To see that $g$ is multiplicative it suffices to show that 
$$(x\cup u)\cdot (y\cup u)=(x\cdot y\cdot e)\cup u,$$
for every $x,y\in\cA$. This follows from Lemma $\ref{productemagic}$, since $e=\iota^*(u)$.
Therefore $g$ is a morphism of cdga's. 
Additionally, it induces the Thom isomorphism after taking cohomology, 
hence, by the Thom isomorphism theorem, $g$ is a quasi-isomorphism.
\end{proof}

\begin{lemma}\label{lemma:thom1}
The triple $(\cA[e],d,\mu)$ of Definition $\ref{defM}$ is a cdga
which does not depend, up to quasi-isomorphism, on the chosen representative of the Euler class.
\end{lemma}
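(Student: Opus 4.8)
The plan is to split the statement into two independent claims: (i) $(\cA[e],d,\mu)$ is a cdga, and (ii) it is independent of the choice of representative of $e$ up to quasi-isomorphism.

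For (i), since the underlying graded vector space is just the shift $s^n\cA$, the only things to check are that $d$ is a derivation for $\mu$, that $d^2=0$, that $\mu$ is associative, and that $\mu$ is graded-commutative. All of these are routine transports of the corresponding properties of $\cA$ through the formula $\mu(w_x,w_y)=w_{exy}$, with two small points worth making explicit. First, $d^2(w_x)=w_{d^2x}=0$ since $d^2=0$ in $\cA$. Second, $d(\mu(w_x,w_y))=d(w_{exy})=w_{d(exy)}=w_{(de)xy}+(-1)^{|e|}w_{e\,d(x)\,y}+\dots$; here one uses that $e$ is a \emph{cocycle}, $de=0$ (it represents a cohomology class), to kill the first term, and then the sign $(-1)^{|e|}=1$ because $n$ is even, so that $d$ is genuinely a derivation on $\cA[e]$ with the shifted degrees. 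Graded commutativity of $\mu$ likewise follows from that of $\cA$ together with $|e|$ even. I would state these as a short computation rather than grinding through every sign.

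For (ii), let $e$ and $e'$ be two cocycles in $\cA^n$ with $[e]=[e']$, say $e'=e+dh$ for some $h\in\cA^{n-1}$. I want a quasi-isomorphism $\cA[e]\to\cA[e']$. The natural guess is $\Phi(w_x)=w_{x}$, but this is not multiplicative since $\mu$ uses $e$ on one side and $e'$ on the other; the correction is to twist by $h$. Concretely, I would try $\Phi(w_x)=w_{\lambda(x)}$ for a suitable degree-$0$ automorphism $\lambda$ of $\cA$, or more robustly, realise both $\cA[e]$ and $\cA[e']$ via Proposition \ref{mainpropThom} as models of $\Apl(D(E),S(E))$: by that Proposition, for any representative $u$ of the Thom class with $\iota^*(u)=e$, the map $w_x\mapsto x\cup u$ is a quasi-isomorphism $\cA[e]\xrightarrow{\sim}\Apl(D(E),S(E))$, and similarly for a representative $u'$ with $\iota^*(u')=e'$. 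Composing the first with a homotopy inverse of the second gives the desired zig-zag of quasi-isomorphisms $\cA[e]\simeq\cA[e']$. The only thing to verify is that, given cohomologous cocycles $e,e'$, one can indeed choose Thom-class representatives $u,u'$ in $\Apl(D(E),S(E))$ with $\iota^*(u)=e$ and $\iota^*(u')=e'$: this holds because $\iota^*$ is surjective on the relevant cochains (it is induced by a cofibration) and the Thom class is characterised cohomologically, so any cocycle lift of a Thom-class representative in $\cA$ that restricts correctly is again a Thom-class representative.

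The main obstacle is (ii): one must produce the comparison quasi-isomorphism purely algebraically, without reference to any particular $\cA$ coming from a bundle, since Lemma \ref{lemma:thom1} is phrased for an arbitrary cdga $\cA$ and arbitrary cocycle $e$. So the route through Proposition \ref{mainpropThom} is not available in full generality, and I would instead construct $\Phi\colon\cA[e']\to\cA[e]$ directly. Writing $e'=e+dh$, a computation shows that $w_x\mapsto w_x$ fails to commute with $d$ and $\mu$ by terms involving $h$; the fix is to look for a strict isomorphism after noting that $\cA[e]$ and $\cA[e']$ are both the total complex of the same double-graded object with differentials differing by an exact perturbation, so that a standard perturbation-type argument (or the explicit formula $\Phi(w_x)=w_{x}+(\text{terms in }h)$, chosen degree by degree) yields an honest quasi-isomorphism. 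This is where the real bookkeeping lies, but it is formal once one commits to tracking the $h$-dependent corrections; everything else is immediate.
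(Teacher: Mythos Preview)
Your treatment of (i) is correct and essentially the same as the paper's: the key inputs are $de=0$ (for the Leibniz rule) and $|e|$ even (for graded commutativity and for the signs in the shifted degrees to work out).

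For (ii), there is a genuine misconception. You write that the identity $w_x\mapsto w_x$ ``fails to commute with $d$ and $\mu$'' and that the two structures have ``differentials differing by an exact perturbation''. But look again at Definition~\ref{defM}: the differential on $\cA[e]$ is $d(w_x)=w_{dx}$, which does \emph{not} involve $e$ at all. Thus $\cA[e]$ and $\cA[e']$ have \emph{the same} underlying cochain complex, and the identity map is already a chain isomorphism. The only thing that differs is the multiplication. This is the observation that drives the paper's argument, and your perturbation/double-complex framing misses it entirely.

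The paper's proof of (ii) simply writes $e-e'=dz$ and observes that $H(w_x\otimes w_y)=w_{xyz}$ is a chain homotopy between the two multiplications $\mu_e$ and $\mu_{e'}$ on the common complex $s^n\cA$; indeed
\[
\mu_e(w_x,w_y)-\mu_{e'}(w_x,w_y)=w_{(e-e')xy}=w_{(dz)xy},
\]
and one checks this is $(dH - H d)(w_x\otimes w_y)$. If you want to make the passage from ``homotopic multiplications on the same complex'' to ``quasi-isomorphic cdga's'' fully explicit, the cleanest route is a path-object zig-zag: set $\cB=\cA\otimes\Lambda(t,dt)$ and $\tilde e=e+d(tz)\in\cB^n$; then the evaluation quasi-isomorphisms $\cB\to\cA$ at $t=0,1$ send $\tilde e$ to $e$ and $e'$ respectively, and by Lemma~\ref{lemma:thom2} they induce quasi-isomorphisms $\cA[e]\xleftarrow{\ \sim\ }\cB[\tilde e]\xrightarrow{\ \sim\ }\cA[e']$. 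This is purely algebraic and works for arbitrary $\cA$, which is what you correctly noted is required.
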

\begin{proof}
Since by assumption, $e$ is of even degree, 
for every $a\in \cA$, we have $(-1)^{|w_a|}=(-1)^{|a|}$.
Hence $\cA[e]=s^n\cA$ with the product $w_a\cdot w_b=w_{e a b}$
is graded commutative. Also, since $de=0$, we have
$$d(w_a\cdot w_b)=d(w_{e a b})=w_{e d(a b)}=w_{e(da\cdot b+(-1)^{|a|}a\cdot db)}=
w_{da}\cdot w_b+(-1)^{|w_a|}w_a\cdot w_{db}.
$$
Therefore the Leibniz rule is satisfied. This proves that $(\cA[e],d,\mu)$ is a cdga.

Let $e,e'$ be two representatives of the Euler class%, and let $\cM^a$ be the cdga constructed using the Euler class $a=e,e'$
. Then, $e-e'$ is exact, so there is a $z$ with $dz=e-e'$. Define a homotopy between the multiplications in $\cA[e]$ and $\cA[e']$ as $H(x\otimes y) = xyz$.
\end{proof}

The following lemma is straightforward:
\begin{lemma}\label{lemma:thom2} Let $f\colon \cA\to \cA'$ be a quasi-isomorphisms of cdgas, and let $e\in \cA$ be a class of even dimension. Then
\begin{enumerate}
\item The morphism $g\colon \cA[e]\to \cA'[f(e)]$ given by $g(w_x) = w_{f(x)}$ is well-defined and a quasi-isomorphism.
\item The following diagram commutes
\[
\xymatrix{
\cA\otimes \cA[e]\ar[d]^{f\otimes g}\ar[r]^-{\cup}&\cA[e]\ar[d]^{g}\\
\cA'\otimes \cA'[f(e)]\ar[r]^-{\cup}&\cA'[f(e)]
}
\]
\end{enumerate}
\end{lemma}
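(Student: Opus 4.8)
The plan is to check directly that $g$ is a morphism of cdga's, and then to recognise it — after forgetting products — as a suspension of $f$, from which the quasi-isomorphism property is immediate; the commutativity in (2) is then a one-line diagram chase.

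First I would record the key observation that the underlying cochain complex of $\cA[e]$ is literally the $n$-fold suspension $s^n(\cA,d)$: the class $e$ enters Definition $\ref{defM}$ only through the product $\mu$, never through the differential $d(w_x)=w_{dx}$. The same holds for $\cA'[f(e)]$, and $f(e)$ sits in even degree since $f$ is degree-preserving, so $\cA'[f(e)]$ is a cdga by Lemma $\ref{lemma:thom1}$. Given this, compatibility of $g$ with differentials is just the fact that $f$ is a chain map, namely $g(d\,w_x)=w_{f(dx)}=w_{d\,f(x)}=d\,g(w_x)$, and compatibility with products is multiplicativity of $f$ combined with the formula for $\mu$:
\[
g(w_x\cdot w_y)=g(w_{exy})=w_{f(e)f(x)f(y)}=w_{f(x)}\cdot w_{f(y)}=g(w_x)\cdot g(w_y).
\]
Hence $g$ is a morphism of cdga's, and it is manifestly well defined since it is nothing but $f$ shifted by $n$.

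For the quasi-isomorphism claim in (1) I would pass to cohomology using the observation above: $H^k(\cA[e])\cong H^{k-n}(\cA)$ and $H^k(\cA'[f(e)])\cong H^{k-n}(\cA')$, and under these identifications the map induced by $g$ is precisely the shift of $H^*(f)$, which is an isomorphism. For (2) I would chase $(x,w_y)\in\cA\otimes\cA[e]$ around the square: along the top-then-right it goes to $g(w_{xy})=w_{f(x)f(y)}$, and along the left-then-bottom it goes to $w_{f(x)}\cdot w_{f(y)}=w_{f(x)f(y)}$, using multiplicativity of $f$ once more. There is essentially no obstacle here — the statement is flagged as straightforward and every step is formal; the only point that deserves a moment's attention is isolating the twisted product on $\cA[e]$ from its untwisted differential, which is exactly what reduces the cohomology computation, and hence the quasi-isomorphism property of $g$, to that of $f$.
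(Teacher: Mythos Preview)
Your proof is correct and matches the paper's treatment: the paper simply declares the lemma ``straightforward'' and gives no argument, and your direct verification---checking that $g$ is a cdga morphism and observing that, as a map of complexes, it is just the $n$-th suspension of $f$---is exactly the routine check the authors have in mind.
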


Puting together Proposition \ref{mainpropThom} and the last two lemmas, we have:
\begin{theorem}\label{mainthmThom}
Let $\xi\colon E\to B$ be an oriented vector bundle of even rank $n$, let $\cA$ be a cdga model of $B$ and let $e\in \cA^n$ be a representative of the Euler class. Then $\cA[e]$ is a cdga model of $\Th(\xi)$ and the product $\cA\otimes \cA[e]\to \cA[e]$ given by $(x,w_y)\mapsto w_{xy}$ is a model of the relative cup product.
\end{theorem}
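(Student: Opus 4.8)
The plan is to combine the three preceding results by a standard ``zig-zag transport'' argument. Start with the concrete model provided by Proposition \ref{mainpropThom}: if we take $\cA_0 := \Apl(D(E))$, a choice $u \in \Apl(D(E), S(E))$ of a representative of the Thom class, and $e_0 := \iota^*(u) \in \cA_0^n$, then $g\colon \cA_0[e_0] \to \Apl(D(E), S(E))$ is a quasi-isomorphism compatible with the relative cup products (part (2) of that proposition). Since $D(\xi)\colon D(E) \to B$ is a homotopy equivalence, $\cA_0$ is itself a cdga model of $B$, and after adjoining $\bQ$ in degree $0$ the cdga $\cA_0[e_0]$ is a cdga model of $\Th(\xi)$, together with the asserted module structure over $\cA_0$.

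The remaining work is to show that this conclusion is independent of all the choices, so that it applies to the given model $\cA$ of $B$ and the given representative $e \in \cA^n$ of the Euler class. Any cdga model $\cA$ of $B$ is connected to $\cA_0 = \Apl(D(E))$ by a zig-zag of quasi-isomorphisms of cdgas; I would first reduce, by composing, to the case of a single quasi-isomorphism, invoking that a zig-zag of quasi-isomorphisms can be replaced by one going through a common model (e.g.\ via Sullivan minimal models, or simply by iterating). So suppose $f\colon \cA \to \cA_0$ (the direction can be handled symmetrically, and I would remark on this) is a quasi-isomorphism. The class $f(e) \in \cA_0^n$ is then cohomologous to $e_0$, both representing the Euler class, so Lemma \ref{lemma:thom1} gives a quasi-isomorphism $\cA_0[f(e)] \simeq \cA_0[e_0]$, while Lemma \ref{lemma:thom2} gives a quasi-isomorphism $\cA[e] \to \cA_0[f(e)]$, both compatible with the respective cup-product module structures via the displayed commuting squares. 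Chaining these with $g$ from Proposition \ref{mainpropThom} exhibits $\cA[e]$ as a cdga model of $\Th(\xi)$ and identifies the module map $\cA \otimes \cA[e] \to \cA[e]$, $(x, w_y) \mapsto w_{xy}$, with a model of the relative cup product.

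The only genuinely delicate point is the handling of the zig-zag: one must make sure that when the quasi-isomorphism connecting $\cA$ to $\cA_0$ points the ``wrong way'', the construction $\cA \rightsquigarrow \cA[e]$ is still functorial enough. This is dealt with by noting that Lemma \ref{lemma:thom2} applies to any quasi-isomorphism $f\colon \cA \to \cA'$ and any even-degree cocycle $e \in \cA$, so one can always push the Euler class representative forward along each arrow of the zig-zag and compare at each stage; when an arrow points backward, one lifts $e$ to a cocycle in the source along the surjection-on-cohomology, which is possible precisely because $e$ is a cocycle representing a class pulled back from $B$. Everything else is formal bookkeeping with the two commuting squares, so I would state the independence cleanly and leave these verifications to the reader.
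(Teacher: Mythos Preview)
Your proposal is correct and follows essentially the same approach as the paper: the paper's proof is the single sentence ``Putting together Proposition~\ref{mainpropThom} and the last two lemmas, we have,'' and you have correctly unpacked what this means, including the zig-zag transport via Lemmas~\ref{lemma:thom1} and~\ref{lemma:thom2} that the paper leaves implicit. Your extra care about the direction of arrows and the lifting of the Euler-class representative along backward quasi-isomorphisms is a welcome clarification of a point the paper passes over in silence.
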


\subsection{Non-orientable vector bundles}

Note that if $Y\subset X$ is a cofibration of path connected spaces, then
taking local coefficients in a pair of real line bundles $\ell,\ell'$ over $X$ gives the relative cup product with local coefficients:
\[H^p(X;\ell)\otimes \tilde{H}^q(X,Y;\ell')\lra \tilde{H}^{p+q}(X,Y;\ell\otimes\ell').\]

Recall the functor $\Apl\colon \Top\to \QCDGA$ that first takes a space to its singular simplicial 
set $\sing(X)$, and then takes piecewise linear forms on this simplicial set. In detail, let $A$ be the simplicial cdga given by 
\[A(n) = \Lambda\left(t_0,t_1,\ldots,t_n,dt_0,dt_1,\ldots,dt_n\right)/\left(\sum_{i=0}^n{t_i} = 1, \sum_{i=0}^n dt_i = 0\right)\]
with the obvious face and degeneracy maps. Then $\Apl(X)$ is the set of simplicial maps from $\sing(X)$ to $A$. The cdga structure on $A$ induces a cdga structure on 
the piecewise linear forms $\Apl(X)$. 

This functor generalises further to the category of pairs of topological spaces with local coefficient systems, i.e., the category $\Top^\tau$ of triples $(X,Y,\ell)$, where $Y\subset X$ is a cofibration of topological spaces and $\ell\colon E\to X$ is a real line bundle. There is a homomorphism $\varphi\colon \mathrm{O}(\bR)\cong \bZ/2\to \Aut(A(n))$ that sends the reflection to the automorphism that interchanges $t_0$ with $t_1$, and $dt_0$ with $dt_1$ and leaves $t_i$ unchanged for $i>1$. Using $\varphi$, we can associate to $(X,\ell)$ a new bundle $(X,\bar{\ell})$ of simplicial cdga's over $X$ whose fibre is isomorphic to $A$. Taking singular simplices, we obtain a bundle of simplicial cdga's over $\sing(X)$ with fibre isomorphic to $A$. Define $\Apl(X,Y;\ell)$ as the set of simplicial sections of this bundle that take value $0$ on $\sing(Y)\subset \sing(X)$. 

The set $\Apl(X,Y;\ell)$ has the structure of a differential graded $\bQ$-vector space, and, if $\ell'$ is another line bundle over $X$, the diagonal map induces multiplications
\begin{align}\label{eq:17}
\Apl(X,Y;\ell)\otimes\Apl(X,Y;\ell')&\lra \Apl(X,Y;\ell\otimes\ell')\\
\Apl(X;\ell)\otimes \Apl(X,Y;\ell')&\lra \tildeApl(X,Y;\ell\otimes \ell')
\end{align}
If $\ell$ is trivial, this becomes the usual multiplication and $\Apl(X,Y;\ell)$ is isomorphic to the kernel of the augmentation $\Apl(X/Y)\to \Apl(Y/Y)\cong \bQ$. 
This multiplication is natural with respect to the map $(X,\emptyset)\to (X,Y)$, i.e., the following diagram commutes:
\[\xymatrix{
\Apl(X;\ell)\otimes\Apl(X,Y;\ell')\ar[d]\ar[r]& \Apl(X,Y;\ell\otimes\ell')\ar[d]\\
\Apl(X;\ell)\otimes \Apl(X;\ell')\ar[r]& \Apl(X;\ell\otimes \ell')
}\]
Observe additionally that the differential graded module $\Apl(X,Y;\ell\otimes\ell')$ includes in
$\Apl(X,\ell\otimes\ell')$ as the kernel of the restriction homomorphism 
$\Apl(X;\ell\otimes \ell')\to \Apl(Y;i^*(\ell\otimes\ell'))$, and therefore the rightmost vertical map is injective. 
In particular, Lemma $\ref{lemma:kernel}$ is also true for the relative cup product with twisted coefficients.

Let $\xi\colon E\to B$ be a vector bundle of rank $k$. Let $\ori$ be the orientation bundle of $\xi$ 
and recall that $\ori\otimes\ori$ is the trivial line bundle.
By the previous discussion the relative cup product 
\[H^p(D(E);\ori)\otimes H^q(D(E),S(E);\ori)\lra H^{p+q}(D(E),S(E))\]
is induced by
$$\cup\colon \Apl(D(E);\ori)\otimes \Apl(D(E),S(E);\ori)\lra \Apl(D(E),S(E)).$$
It is straightforward to verify that the formulas of Lemma $\ref{productemagic}$ are also valid in the twisted setting,
for any $x,y\in\cA_{PL}(D(E);\ori)$.

Now, let $\xi\colon E\to B$ be a vector bundle of rank $n$, and let $\cA$ be a cdga model of $\Apl(D(E))$, and let $\cA_\ori$ be a cdga model of $\Apl(D(E);\ori)$, so that \eqref{eq:17} gives the following products:
\[\cA\otimes \cA\lra \cA,\quad \cA\otimes\cA_\ori\lra\cA_\ori, \quad \cA_\ori\otimes \cA_\ori\lra \cA.\]
Let $e\in \cA_\ori$ be a representative of the twisted Euler class.

\begin{df} In this setting, define the cdga $\cA_\ori[e]$ whose underlying vector space is $s^n\cA_\ori$, with differential $d(w_x)=w_{dx}$ and multiplication $\mu(w_x,w_y)=w_{exy}$. 
\end{df}

\begin{proposition}\label{mainpropThomtwisted} Let $\cA = \Apl(D(E))$ and let $\cA_\ori = \Apl(D(E);\ori)$. Let $u\in \Apl(D(E),S(E);\ori)$ be a representative of the Euler class, and let $e=\iota^*(u)\in \cA_\ori$ be its pullback. Then: 
\begin{enumerate}
\item The homomorphism $g\colon \cA_\ori[e]\to \Apl(D(E),S(E))$ given by $g(w_x) = x\cup u$ is a quasi-isomorphism. 
\item The product $\cup \colon \cA_\ori\otimes s^n\cA\lra \cA_\ori[e]$ given by 
$(x, w_y)\mapsto w_{xy}$ is a model of the relative cup product, that is, the following diagram commutes:
\[
\xymatrix{
\cA_\ori\otimes s^n\cA\ar[d]^{f\otimes g}\ar[r]^-{\cup}&\cA_\ori[e]\ar[d]^g\\
\cA_{PL}(D(E);\ori)\otimes \cA_{PL}(D(E),S(E);\ori)\ar[r]^-{\cup}&\cA_{PL}(D(E),S(E))
}
\]
\end{enumerate}
\end{proposition}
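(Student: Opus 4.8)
The plan is to transport the proof of Proposition~\ref{mainpropThom} step by step to the twisted setting; every nontrivial ingredient it needs has already been recorded in the discussion preceding the statement. Recall from there that Lemma~\ref{lemma:kernel} stays valid for the relative cup product with twisted coefficients, that the identities of Lemma~\ref{productemagic} hold for $x,y\in\Apl(D(E);\ori)$, and that, because $\ori\otimes\ori$ is trivial, the three multiplications $\cA\otimes\cA\to\cA$, $\cA\otimes\cA_\ori\to\cA_\ori$ and $\cA_\ori\otimes\cA_\ori\to\cA$ are mutually associative. As in the oriented even-rank case one takes $n$ even (the odd-rank non-orientable bundles having been disposed of earlier via $\Th(\xi)\simeq\Sigma^{n-1}\Th(\ell)$), so that $e$ has even total degree and is central for all three products; the argument of Lemma~\ref{lemma:thom1} then transcribes word for word to confirm that $\cA_\ori[e]=(s^n\cA_\ori,d,\mu)$ is a (non-unital) cdga, using $de=0$ for the Leibniz rule and the evenness of $n$ for graded commutativity.

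For part~(1) I would fix $u$ to be a cocycle representative of the twisted Thom class, so that $du=0$ and $e=\iota^*(u)$ represents the twisted Euler class. First, $g$ is well defined (since $\ori\otimes\ori$ is trivial, $x\cup u\in\Apl(D(E),S(E))$) and compatible with differentials: as the multiplication $\cA_\ori\otimes\Apl(D(E),S(E);\ori)\to\Apl(D(E),S(E))$ is a morphism of differential graded modules and $du=0$, one has $d(g(w_x))=d(x\cup u)=(dx)\cup u=g(w_{dx})=g(d(w_x))$. Second, $g$ is multiplicative: for $x,y\in\cA_\ori$ the twisted form of Lemma~\ref{productemagic} gives $(x\cup u)\cdot(y\cup u)=(x\cdot y\cdot e)\cup u$ in $\Apl(D(E),S(E))$, and since $xy\in\cA$, $e\in\cA_\ori$ and $e$ is central, this equals $(exy)\cup u=g(w_{exy})=g(\mu(w_x,w_y))$; thus $g$ is a morphism of cdga's. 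Third, $g$ is a quasi-isomorphism: the differential on $s^n\cA_\ori$ is $d(w_x)=w_{dx}$, so $H^*(\cA_\ori[e])\cong s^nH^*(\cA_\ori)\cong s^nH^*(D(E);\ori)$, and on cohomology $g$ sends $[w_x]$ to $[x]\cup u$, that is, up to the degree shift $s^n$, it is the homomorphism $-\cup u\colon H^*(D(E);\ori)\to\tilde H^{*+n}(\Th(\xi))$. By the Thom isomorphism theorem with twisted coefficients this is an isomorphism, whence $g$ is a quasi-isomorphism.

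Part~(2) is then formal, just as the second assertion of Proposition~\ref{mainpropThom} follows from its first and Lemma~\ref{lemma:kernel}. Chasing $(x,w_y)\in\cA_\ori\otimes s^n\cA$ clockwise around the square gives $g(w_{xy})=(xy)\cup u$; going down (the identity on the first factor and $w_y\mapsto y\cup u$ on the second) and then right along the twisted relative cup product gives $x\cup(y\cup u)$; and these coincide because the twisted products are associative, $(xy)\cup u=x\cup(y\cup u)$, which is exactly the commutativity of the diagram.

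I do not expect a genuine obstacle: the two substantive facts, the twisted version of Lemma~\ref{productemagic} and the Thom isomorphism theorem with twisted coefficients, are both in hand, so what remains is organisation. The only points that call for a little care are tracking which of the three multiplications is in play at each step (and invoking the right associativity or graded-commutativity relation among them) and the signs produced by the $s^n$-shift; the latter is precisely where evenness of $n$ is used, and it is absorbed once and for all into the twisted transcription of Lemma~\ref{lemma:thom1}.
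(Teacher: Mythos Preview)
Your proposal is correct and follows exactly the route the paper itself takes: its proof of this proposition is a single sentence saying that the proof of the oriented case (Proposition~\ref{mainpropThom}/Theorem~\ref{mainthmThom}) carries over verbatim ``with the obvious modifications'' and with the twisted Thom isomorphism replacing the ordinary one. You have simply made those obvious modifications explicit---tracking which of the three products $\cA\otimes\cA\to\cA$, $\cA\otimes\cA_\ori\to\cA_\ori$, $\cA_\ori\otimes\cA_\ori\to\cA$ is in play, invoking the twisted form of Lemma~\ref{productemagic}, and appealing to the twisted Thom isomorphism for the quasi-isomorphism claim---so there is nothing to add.
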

\begin{proof}
It is straightforward to verify that the proof of Theorem $\ref{mainthmThom}$ is valid in the twisted setting with the obvious modifications
and using the twisted version of Thom's isomorphism theorem in cohomology.
\end{proof}
Using twisted versions of Lemmas \ref{lemma:thom1} and \ref{lemma:thom2}, we arrive to the most general version of the Thom isomorphism theorem at the level of cochains:

\begin{theorem}\label{mainthmThomtwisted} Let $\xi\colon E\to B$ be a vector bundle of rank $n$ and let $\ori$ be the orientation bundle of $E$. Let $\cA$ and $\cA_\ori$ be cdga models of $B$ with trivial and twisted coefficients respectively, and let $e\in \cA_\ori$ be a representative of the twisted Euler class. Then the cdga $\cA_\ori[e]$ is a model of $\Th(\xi)$ and $\cA_\ori\otimes s^n\cA\to \cA_\ori[e]$ is a model of the relative cup product.
\end{theorem}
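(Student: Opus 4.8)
The plan is to deduce Theorem~\ref{mainthmThomtwisted} from Proposition~\ref{mainpropThomtwisted} in exactly the same way that Theorem~\ref{mainthmThom} was deduced from Proposition~\ref{mainpropThom}, the only extra bookkeeping being the presence of two coefficient bundles $\bQ$ and $\ori$ linked by the relations $\ori\otimes\ori\cong\bQ$. First I would record the twisted analogue of Lemma~\ref{lemma:thom1}: that $(\cA_\ori[e],d,\mu)$ is a genuine cdga and that its quasi-isomorphism type is independent of the chosen representative $e$ of the twisted Euler class. Graded commutativity and the Leibniz rule are checked verbatim as in Lemma~\ref{lemma:thom1}, using that $e$ has even degree $n$ (so $(-1)^{|w_a|}=(-1)^{|a|}$) and that $de=0$; one must only note that the product $w_a\cdot w_b=w_{eab}$ makes sense because $e\in\cA_\ori$, $ab\in\cA$ under $\cA\otimes\cA\to\cA$, and $e\cdot(ab)\in\cA_\ori$ under $\cA_\ori\otimes\cA\to\cA_\ori$. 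For representative-independence, if $dz=e-e'$ with $z\in\cA_\ori$, the map $H(w_x\otimes w_y)=w_{xyz}$ is a cdga homotopy between the two multiplications on $s^n\cA_\ori$, again because $z\in\cA_\ori$ keeps everything in the right module.

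Next I would state and prove the twisted analogue of Lemma~\ref{lemma:thom2}: given a pair of quasi-isomorphisms compatible with all the module structures --- that is, quasi-isomorphisms $f\colon\cA\to\cA'$ and $f_\ori\colon\cA_\ori\to\cA'_\ori$ intertwining the three products $\cA\otimes\cA\to\cA$, $\cA\otimes\cA_\ori\to\cA_\ori$, $\cA_\ori\otimes\cA_\ori\to\cA$ with their primed counterparts --- the induced map $g\colon\cA_\ori[e]\to\cA'_\ori[f_\ori(e)]$, $g(w_x)=w_{f_\ori(x)}$, is a well-defined quasi-isomorphism and fits into the evident commuting square with the relative cup product $\cA_\ori\otimes s^n\cA\to\cA_\ori[e]$. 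This is purely formal once one has fixed the category of ``cdga models with a twisted module'' in which the $\Apl$-constructions of the previous subsection live; I would phrase it so that a string of such compatible quasi-isomorphisms connects $(\Apl(D(E)),\Apl(D(E);\ori))$ to the abstract models $(\cA,\cA_\ori)$ of $B$ (using that $D(\xi)\colon D(E)\to B$ is a homotopy equivalence, so $\Apl(D(E))\simeq\Apl(B)$ and $\Apl(D(E);\ori)\simeq\Apl(B;\ori)$ compatibly).

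With these two lemmas in hand the theorem is a concatenation: Proposition~\ref{mainpropThomtwisted} identifies $\cA_\ori[e]$ with $\Apl(D(E),S(E))\simeq\tilde\Apl(\Th(\xi))$ for the ambient representatives $\cA=\Apl(D(E))$, $\cA_\ori=\Apl(D(E);\ori)$, $e=\iota^*(u)$; the twisted Lemma~\ref{lemma:thom2} transports this along the string of compatible quasi-isomorphisms to the abstract models; and the twisted Lemma~\ref{lemma:thom1} absorbs the change from $\iota^*(u)$ to an arbitrary representative of the twisted Euler class. Adding $\bQ$ in degree $0$ passes from $\tilde\Apl(\Th(\xi))$ to a model of $\Th(\xi)$, exactly as recorded before Lemma~\ref{lemma:kernel}. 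Compatibility of all these maps with the relative cup product follows by stacking the commuting squares of Proposition~\ref{mainpropThomtwisted}(2) and of the two lemmas.

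The main obstacle is not any single computation --- each is a transcription of the untwisted case --- but setting up the right ambient category so that ``model of $B$ with twisted coefficients'' and the three pairing maps are functorial enough that a zig-zag of quasi-isomorphisms of bases induces a zig-zag of the pair $(\cA,\cA_\ori)$ respecting all structure; in other words, checking that $\Apl(-;\ori)$ together with the multiplications \eqref{eq:17} assembles into a homotopically well-behaved functor on $\Top^\tau$, so that Lemma~\ref{lemma:thom2} can legitimately be iterated along such a zig-zag. Once that bookkeeping is granted, everything else is, as the authors say, ``straightforward to verify with the obvious modifications.''
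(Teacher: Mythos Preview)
Your proposal is correct and follows exactly the approach the paper indicates: the paper's ``proof'' of Theorem~\ref{mainthmThomtwisted} is simply the sentence ``Using twisted versions of Lemmas~\ref{lemma:thom1} and~\ref{lemma:thom2}, we arrive to the most general version of the Thom isomorphism theorem at the level of cochains,'' and you have spelled out precisely those twisted versions and how they combine with Proposition~\ref{mainpropThomtwisted}. One tiny slip: when checking that $w_a\cdot w_b=w_{eab}$ lands in $\cA_\ori[e]$, the product $ab$ (for $a,b\in\cA_\ori$) lies in $\cA$ via the pairing $\cA_\ori\otimes\cA_\ori\to\cA$, not $\cA\otimes\cA\to\cA$ as you wrote, but your conclusion is unaffected.
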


\subsection{Lie models of Thom spaces} 
In the algebrisation of rational homotopy theory of Quillen \cite{Quillen}, rational spaces are modelled by differential graded Lie algebras (dgl's). Each quasi-isomorphism class of dgl's contains a canonical free dgl that is unique up to isomorphism. If $X$ is a simply connected space, then the canonical dgl that models $X$ receives the name of \emph{Quillen model of $X$}, and its vector space of generators is canonically identified with the rational homology of $X$ (here we assume that $X$ is a finite complex). 

In our situation, the Thom isomorphism theorem gives us the basis of the Quillen model for free, and therefore it only remains to figure out its differential. 

A natural way to try to do this is to use the homotopy transfer theorem (HTT) \cite{Loday-Vallette}, which establishes a correspondence between isomorphism classes of algebra structures on a chain complex and isomorphism classes of $A_\infty$-structures on the homology of the chain complex. Then, an $A_\infty$-structure on the cohomology gives a canonical differential on the free Lie algebra generated by the desuspension of the homology. If $X$ is a space, and $\cA$ is a cdga model of $X$, then the HTT gives an $A_\infty$-structure on $H^*(\cA)$, from which one obtains a free dgl generated by $H_*(\cA)\cong H_*(X)$. The latter is the Quillen model of $X$.

If one starts with the model of $\Th(\xi)$ obtained in the Theorem \ref{mainthmThom} and applies the HTT, the resulting $A_\infty$-structure is easily described in terms of trees, which can be used to describe the differential in the Quillen model of $\Th(\xi)$. What we have not been able to do, is to give the Quillen model of $\Th(\xi)$ \emph{in terms of the Quillen model of $B$}. We pose it as a question.
\begin{question} Is it possible to describe the Quillen model of $\Th(\xi)$ in terms of the Quillen model of $B$ and the Euler class of $\xi$?
\end{question}

Here are two reasons to look for such a thing:
\begin{enumerate}
\item If one wants to investigate maps to a Thom space, then in the Sullivan approach one needs a cofibrant cdga model of the Thom space, and in the Quillen approach one needs a fibrant dgl model of the Thom space. But our model of Theorem \ref{mainthmThom} is not cofibrant. On the other hand, any dgl is fibrant.
\item  The Quillen model of $\Th(\xi)$ is generated by $H_*(\Th(\xi))$, which is given by the Thom isomorphism theorem. On the other hand, the Sullivan model of $\Th(\xi)$ is generated by the rational homotopy groups of $\Th(\xi)$, which are usually infinite and with exponential growth (for instance, when the vector bundle is trivial).
\end{enumerate}
If the space is formal, then a trivial application of the HTT (or even, as a direct consequence of our main theorem) exhibits the following Quillen model of $\Th(\xi)$:

Let $B$ be a formal space with Quillen model $(\bL(V),d)$. This is equivalent to ask $V$ to have a ordered set of generators $\{v_i\}$ and the differential $d$ has to be quadratic and order-preserving, i.e., $d(v_k) = \sum_{i,j<k}{\lambda_{i,j}[v_i,v_j]}$. Let $\xi\colon E\to B$ be a vector bundle with Euler class $e$. Let $\varphi_e\colon V\to V$ be the dual of the map $V^*\cong H^*(B)\to H^{*+n}(V)\cong V^*$ given by mutiplication by the Euler class. From Theorem \ref{mainthmThom}, one can deduce the following
\begin{corollary} If $B$ is formal and $\xi$ is orientable, the Quillen model of $\Th(\xi)$ is $(\bL(s^nV),\bar{d})$ with differential 
\[\bar{d}(s^nv_k) = \sum_{i,j<k}{\lambda_{i,j}\left([s^n \varphi_e(v_i),s^n v_j] + [s^n v_i,s^n \varphi_e(v_j)]\right)}.\]
\end{corollary}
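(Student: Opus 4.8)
The plan is to obtain the corollary as a direct consequence of Theorem~\ref{mainthmThom} by transporting the Sullivan model $\cA[e]$ through Quillen's functors, using the fact that for a formal space the passage from the cohomology cdga to the Quillen model is entirely formal and combinatorial. First I would start from a Sullivan model of the formal space $B$. Since $B$ is formal, I may take $\cA = (H^*(B),0)$ with trivial differential as a cdga model; dually, the Quillen model $(\bL(V),d)$ has $V = s^{-1}\tilde H_*(B)$ and quadratic, order-preserving differential $d(v_k)=\sum_{i,j<k}\lambda_{i,j}[v_i,v_j]$, where the structure constants $\lambda_{i,j}$ are precisely the structure constants of the cup product on $H^*(B)$ read off in the dual basis $\{v_i^*\}$. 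This is the standard dictionary between a formal cdga with basis and its Quillen model, so I would simply recall it.

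Next I would apply Theorem~\ref{mainthmThom} with $\cA=(H^*(B),0)$: the cdga $\cA[e] = (s^nH^*(B),0,\mu)$ with $\mu(w_x,w_y)=w_{exy}$ is a model of $\Th(\xi)$, and since its differential vanishes it is again a \emph{formal} cdga. Hence $\Th(\xi)$ is formal (recovering what was already noted for odd rank, now in even rank under the formality of $B$), and its Quillen model is obtained from the same dictionary applied to the cdga $\cA[e]$: it is $(\bL(s^nV),\bar d)$ with $\bar d$ quadratic and governed by the structure constants of the product $\mu$. So the only computation is to express these structure constants in terms of the $\lambda_{i,j}$ and the Euler class. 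By definition $\mu(w_{v_i^*},w_{v_j^*}) = w_{e\cdot v_i^*\cdot v_j^*}$; writing $e\cdot v_i^* = \sum_k (\varphi_e)_{ik} v_k^*$ for the matrix of multiplication by $e$ (whose dual is $\varphi_e$), one gets $e\,v_i^*\,v_j^* = \sum_k (\varphi_e)_{ik}\, v_k^* v_j^*$ and then expands $v_k^* v_j^*$ using the $\lambda$'s. Dualising and desuspending, the quadratic part of $\bar d$ on the generator $s^nv_k$ picks up exactly one factor of $\varphi_e$ on one of the two bracketed inputs, symmetrised over the two slots, giving
\[
\bar d(s^nv_k) = \sum_{i,j<k}\lambda_{i,j}\bigl([s^n\varphi_e(v_i),s^nv_j] + [s^nv_i,s^n\varphi_e(v_j)]\bigr).
\]
I would carry out this bookkeeping carefully, keeping track of the degree shift $s^n$ (harmless since $n$ is even, as already exploited in Lemma~\ref{lemma:thom1}) and of signs.

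The main obstacle — really the only subtle point — is justifying the claim that for a formal space the Quillen model is read off from the cohomology cdga by the naive quadratic formula, and that this procedure is natural enough that applying it to $\cA[e]$ yields a genuine Quillen model of $\Th(\xi)$. This is where one must invoke (a trivial case of) the homotopy transfer theorem as indicated in the text: the minimal $A_\infty$-structure on $H^*(\cA[e])=\cA[e]$ is the strict cdga structure itself (all higher operations vanish because the model already has zero differential and is its own cohomology), so the transferred $L_\infty$-structure on the desuspended homology is a strict dgl, namely the free dgl with the quadratic differential dual to the product. I would state this reduction explicitly, cite \cite{Loday-Vallette}, and note that the ordered-basis hypothesis on $V$ is exactly what makes the differential order-preserving, so that $s^nV$ inherits an ordered basis and $\bar d$ is order-preserving as well. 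Everything else is the routine sign-and-index computation above.
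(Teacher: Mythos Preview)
Your proposal is correct and follows exactly the route the paper indicates: the paper does not give a detailed proof, but says the corollary follows ``as a direct consequence of our main theorem'' via ``a trivial application of the HTT,'' and your argument is precisely this---take $\cA = (H^*(B),0)$, apply Theorem~\ref{mainthmThom} to get the formal cdga $\cA[e]$, and read off the quadratic Quillen differential dual to the product $\mu(w_x,w_y)=w_{exy}$. The bookkeeping you describe for the structure constants (one factor of $\varphi_e$ distributed over the two bracket inputs) is the expected dualisation of ``multiply, then multiply by $e$,'' so there is nothing to add.
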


\section{Weight decompositions on Thom spaces}\label{S3}
In this section, we study (positive) weight decompositions of Thom spaces. We then apply our study
to the theory of representability of cohomology classes by submanifolds.

\subsection{Derationalisation of maps via weight decompositions}

Let $X$ and $Y$ be finite complexes. Assume that we have a map $f\colon X\to Y_\bQ$ from $X$ to the rationalisation of $Y$.
In \cite[Prop.\ 3.1]{Papadima} (see also \cite{Shiga}) it is shown that, whenever $Y$ is a formal simply connected space, the map $f$ admits a \textit{derationalisation}:
there exists a self homotopy equivalence $h\colon Y_\bQ\to Y_\bQ$ such that the map induced in cohomology by $h$ is given by 
multiplication by certain scalars, and the lifting problem
\[\xymatrix{
& & Y\ar[d] \\
X\ar[r]\ar@{-->}[urr] & Y_\bQ\ar[r]^h & Y_\bQ
}\]
has a solution.
In this section, we show that there is a more 
general situation in which the derationalisation problem has a solution:
the existence of positive weight decompositions considered in \cite{Body-Douglas} and \cite{Body-Mimura-Shiga-Sullivan}.

\begin{df}\label{defweightdec}A \emph{weight decomposition} of a cdga $\cA$ is a direct sum decomposition for each $n\geq 0$,
\[\cA^n = \bigoplus_{p\in\mathbb{Z}} \cA_p^n\]
 with $d(\cA_p^n)\subset \cA_{p}^{n+1}$ and $\cA_p^n\cdot \cA_q^m\subset \cA_{p+q}^{n+m}$. 
 A weight decomposition is \emph{positive} if
 $$\cA^n = \bigoplus_{p>0} \cA_p^n\text{ for all }n>0\text{ and }\cA^0=\cA^0_0.$$
  We will say that a space $X$ \textit{admits a (positive) weight decomposition} if there is a cdga model $\cA\longrightarrow \cA_{PL}(X)$ with a (positive) weight decomposition. 
 % $A_0^n=0$ for all $n\neq 0$.
\end{df}

Note that every space admits a trivial weight decomposition of weight 0, which is clearly non-positive. 
However, there are examples of topological spaces which do not admit positive weight decompositions (see \cite{Body-Douglas}).

A weight decomposition on $\cA$ makes its cohomology into a bigraded algebra:
 $$H^n(\cA)_p:=\Ker(d:\cA^n_p\longrightarrow \cA^{n+1}_{p})/\Img(d:\cA_{p}^{n-1}\longrightarrow \cA^n_p).$$

We will use the following Lemma.
\begin{lemma}\label{minimaldec}
Let $\cA$ be a connected cdga with a (positive) weight decomposition. Then a minimal model $\cM\lra \cA$
admits a (positive) weight decomposition.
\end{lemma}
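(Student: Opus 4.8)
The statement is about lifting a positive weight decomposition from a connected cdga $\cA$ to a minimal model $\cM \to \cA$. The natural approach is to build the weight decomposition on $\cM$ inductively, following the tower of elementary extensions that produces the minimal model, and at each stage to decompose the newly adjoined generators into weight-homogeneous pieces by pulling back the decomposition of $\cA$ along the quasi-isomorphism. I will assume $\cM = (\Lambda V, d)$ is a Sullivan minimal model with $V = \bigcup_{k} V(k)$ a filtration by sub-cdga's $\Lambda V(k)$, each obtained from the previous one by adjoining generators killing or creating cohomology (for a connected cdga one may take the standard minimal model construction).

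\textbf{Key steps.} First I would reduce to constructing a weight grading on the generators $V$ that is compatible with $d$ and with the quasi-isomorphism $\psi \colon \cM \to \cA$, since a grading on $V$ extends uniquely to a multiplicative grading on $\Lambda V$, and positivity ($\cM^0 = \cM^0_0$, and everything in positive total degree has positive weight) will follow once each generator lies in positive weight. Second, I would set up the induction over the Sullivan filtration $V(0) \subset V(1) \subset \cdots$. At the inductive step we have a weight-graded sub-cdga $(\Lambda V(k), d)$ with a weight-preserving quasi-isomorphism to $\cA$ (strictly, to a weight-graded sub-cdga of $\cA$, or compatibly with $\psi$), and we adjoin a vector space of new generators $W$ in some fixed degree $n$ with $d \colon W \to (\Lambda V(k))^{n+1}$ and a prescribed map $W \to \cA^n$. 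Both the differential $d$ and the map to $\cA$ land in weight-graded targets; moreover, at the level of cohomology the relevant obstruction/extension classes are weight-homogeneous because the minimal model construction adds generators to hit specific cohomology classes of $\cA$ (which are weight-homogeneous) or to kill specific cohomology classes (also weight-homogeneous, as $H^*(\Lambda V(k))$ inherits the grading). So one can choose a basis of $W$ consisting of vectors whose differential is weight-homogeneous and whose image in $\cA^n$ is weight-homogeneous of the \emph{same} weight — this matching of weights is the crucial point. Assigning each such basis vector that common weight extends the weight decomposition to $\Lambda V(k+1)$ compatibly with $d$ and with $\psi$. Third, I would check positivity is preserved: since $\cA$ has a positive weight decomposition, cohomology classes in positive degree have positive weight, so all generators adjoined in positive degree receive positive weight; the degree-$0$ part is just $\bQ$ in weight $0$. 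Finally, take the union over $k$.

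\textbf{Main obstacle.} The heart of the argument is the claim in the inductive step that the newly adjoined generators can be chosen so that their differential and their image under $\psi$ are weight-homogeneous of a common weight. The subtlety is that a priori $\psi(w)$ for a new generator $w$ is only determined modulo exact elements of $\cA$ (and modulo the choices made in constructing $\psi$ on $\Lambda V(k)$), so one must argue that the defining cohomology class — whether it is a class of $\cA$ being hit, or a class of $H^*(\Lambda V(k))$ being killed — is weight-homogeneous, and then choose the cochain-level representatives ($dw$ inside $\Lambda V(k)$ and $\psi(w)$ inside $\cA$) within the appropriate weight-graded piece. Decomposing $W$ into a direct sum $W = \bigoplus_p W_p$ where $W_p$ consists of generators carrying weight $p$ then works because one can split the relevant (cohomology-level) maps into weight components and lift each component separately using the inductive hypothesis that $\psi|_{\Lambda V(k)}$ is weight-preserving. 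Keeping track of the compatibility with a \emph{fixed} $\psi$ (rather than merely the existence of \emph{some} weight-preserving quasi-isomorphism) throughout the induction is the main bookkeeping difficulty, but no genuinely new idea is needed beyond the standard obstruction-theoretic construction of the minimal model run in the category of weight-graded cdga's.
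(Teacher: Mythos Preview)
Your proposal is correct and follows essentially the same approach as the paper: both rerun the standard inductive construction of the minimal model inside the category of weight-graded cdga's, adjoining new generators weight-by-weight and checking that positivity is inherited. The paper packages the ``matching of weights'' issue you flag by defining the new generators directly as $V[n,0]_p := H^n(C(f[n-1]_p))$, the cohomology of the weight-$p$ component of the mapping cone of the inductively constructed map, which automatically makes both the differential and the map to $\cA$ land in weight $p$; this is exactly the mechanism underlying your inductive step.
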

\begin{proof}
It suffices to revise the construction of minimal models of connected cdga's in the bigraded setting.
Let $\cM[0]=\bQ$ be concentrated in degree 0 and weight 0.
We may assume that we have constructed, for all $i<k$, a cdga $\cM[i]$ admitting a positive weight decomposition,
together with a morphism of cdga's $f[i]:\cM[i]\to \cA$ compatible with the weight decompositions and such that $H^k(f[i])$
is an isomorphism for all $k\leq i$ and $H^{i+1}(f[i])$ is a monomorphism.
Denote by $f[i]_p:\cM[i]_p\to \cA_p$ the morphism of complexes given by the restriction to the weight-$p$ complexes.
The cdga $\cM[n]$ is classically defined in two steps. We just need to take care of the weights during these steps. 
First, let $V[n,0]_p:=H^n(C(f_{n-1,p}))$, where $C(-)$ denotes the mapping cone.
Note that, since the weight decompositions of $\cM[n-1]$ and $\cA$ are positive, we have $V[n,0]_p=0$ for all $p\leq 0$.
Consider $V[n,0]_p$ as a bigraded vector space of cohomological degree $n$ and weight $p$. We then let 
$$\cM[n,0]:=\cM[n-1]\otimes_d\Lambda(\bigoplus_{p} V[n,0]_p).$$
A differential $d:V[n,0]_p\to \cM[n-1]_p^{n+1}$ and a morphism 
$f[n,0]_p:V[n,0]_p\to \cA_p^n$ are defined, as in the classical case, by taking a section of the projection
$$Z^n(C(f_{n-1,p}))\twoheadrightarrow H^n(C(f_{n-1,p})).$$
A minor technical detail is that here, $\Lambda(-)$ denotes the free bigraded algebra on a bigraded vector space,
and the multiplicative extension $\otimes$ is done by adding cohomological degrees and weights accordingly.
The $f[n,0]:\cM[n,0]\to \cA$ is such that $H^{k}(f[n,0])$ is an isomorphism for all $i\leq n$.
The second step is done similarly, by inductively killing the kernel of $H^{n+1}(f[n,0])$
through extensions
$$\cM[n,1]:=\cM[n,0]\otimes_d\Lambda(\bigoplus_{p} V[n,1]_p),\text{ where }V[n,1]_p:=H^{n}(C(f[n,0]_p)$$
and iterating this process until $\mathrm{Ker} H^{n+1}(f[n])=0$, where
$f[n]:=\cup_i f[n,i]$.
\end{proof}

Formal and coformal spaces admit positive weight decompositions. In fact, formal spaces are characterised by
having a positive weight decomposition with $H^n(\cA)_n = H^n(\cA)$,
and a minimal model $\Lambda V$ of a coformal space is characterised by having a positive weight
decomposition of the form $V^n_{n-1} = V^n$.

The following is a simple proof of the fact that weight decompositions detect formality.
\begin{proposition}\label{WeightsFormal}
 Let $\cA$ be a cdga with a weight decomposition $\cA^n=\bigoplus \cA^n_p$.
If $H^n(\cA)_p=0$ for all $p\neq n$ then $\cA$ is a formal cdga.
\end{proposition}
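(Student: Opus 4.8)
The plan is to build, out of the weight decomposition on $\cA$ and the hypothesis that the cohomology is pure (concentrated in weight $= $ degree), an explicit zig-zag of quasi-isomorphisms from $\cA$ to $(H^*(\cA),0)$. First I would invoke Lemma~\ref{minimaldec} to replace $\cA$ by a minimal model $\cM = \Lambda V \to \cA$ carrying a compatible weight decomposition; since quasi-isomorphic cdga's have isomorphic cohomology (as bigraded algebras, by the compatibility), $H^n(\cM)_p = 0$ for $p \neq n$ as well, and it suffices to prove $\cM$ is formal. The advantage of working with $\cM$ is that its generators $V$ inherit a bigrading $V = \bigoplus_{n,p} V^n_p$.

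The key step is to produce a weight-preserving quasi-isomorphism $\psi\colon \cM \to (H^*(\cM),0)$. I would define it on generators $v \in V^n_p$ as follows: if $p = n$, pick a cocycle representative and set $\psi(v)$ to be its cohomology class (this uses that cocycles of degree $n$ and weight $n$ surject onto $H^n(\cM)_n = H^n(\cM)$); if $p \neq n$, set $\psi(v) = 0$. Then extend multiplicatively. The points to check are: (i) $\psi$ is a chain map, i.e. $\psi(dv) = 0$ for all $v$; (ii) $\psi$ is a quasi-isomorphism; (iii) $\psi$ is weight-preserving, where $H^*(\cM)$ is given the weight grading $H^n$ in weight $n$ — this is immediate from the purity hypothesis, since $\psi$ kills everything not in weight $=$ degree and is defined via cohomology classes on the rest.

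For (i): $dv$ is a cocycle lying in $\cM^{n+1}_p$, hence its class lies in $H^{n+1}(\cM)_p$, which is zero unless $p = n+1$. Since $v \in V^n_p$ has the same weight $p$ as $dv$, the two cannot both be "on the diagonal" ($p=n$ and $p=n+1$ are different), so at least one of $v, dv$ is killed by $\psi$; a short case analysis, using that $\psi$ is multiplicative and $\psi \circ d$ must vanish on decomposables by the Leibniz rule once it vanishes on generators, closes this. (For decomposable $dv$, write it in the $V$-generators and use induction on word length together with $d\circ d = 0$ and minimality to see that the diagonal component of $dv$, which is all $\psi$ sees, is itself a cocycle whose class is forced to be zero.) For (ii): $\psi$ induces the identity on $H^*$ in each weight, because on the pure part $H^n(\cM)_n$ a cocycle generator maps to its own class, and in weights $\neq$ degree the cohomology vanishes so there is nothing to check; hence $H^*(\psi)$ is an isomorphism.

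The main obstacle I expect is step (i) — verifying that $\psi$ is genuinely a chain map, in particular that for a generator $v$ with $dv$ \emph{decomposable} the "diagonal part" of $dv$ that $\psi$ detects is exactly a coboundary (equivalently has trivial class). This is where purity is used essentially: $dv \in Z^{n+1}(\cM)_p$ and $H^{n+1}(\cM)_p = 0$ when $p \neq n+1$, forcing $dv = d(\text{something})$ in that weight; combined with the definition of $\psi$ via chosen cohomology representatives one gets $\psi(dv) = 0$. I would organise this as an induction on the stage $\cM[k]$ of the minimal model (as in the proof of Lemma~\ref{minimaldec}), so that at each stage the relevant cocycle conditions are available. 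Everything else — multiplicativity, weight-preservation, the quasi-isomorphism property — is then formal.
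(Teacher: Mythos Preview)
Your route is quite different from the paper's, and considerably more elaborate. The paper never passes to a minimal model. It works directly with $\cA$: define the subspace $\tau\cA\subset\cA$ by
\[
\tau\cA^n_p=\begin{cases}0 & p<n,\\ \Ker(d)\cap\cA^n_n & p=n,\\ \cA^n_p & p>n.\end{cases}
\]
A two-line check shows this is a sub-cdga, the purity hypothesis makes the inclusion $\tau\cA\hookrightarrow\cA$ a quasi-isomorphism, and the projection $\tau\cA^n_n=\Ker(d)\cap\cA^n_n\twoheadrightarrow H^n(\cA)_n=H^n(\cA)$ (zero on the other weights) is a quasi-isomorphism of cdga's onto $(H^*(\cA),0)$. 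No minimal models, no induction, no choices.

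Your plan has a genuine soft spot exactly where you flag it. First, the instruction ``pick a cocycle representative'' for a generator $v\in V^n_n$ is not well-posed: $v$ is a fixed basis vector of $V$, not a cohomology class, and there is no a priori reason for diagonal generators of the bigraded minimal model to be cocycles. Second, even granting that, the verification $\psi(dv)=0$ is incomplete precisely for $v\in V^n_{n+1}$: here $dv$ lies on the diagonal $\cM^{n+1}_{n+1}$ and can contain monomials built entirely from diagonal generators, so $\psi(dv)$ equals the cohomology class of that ``all-diagonal'' part. Your claim that this class vanishes (``the diagonal component of $dv$\ldots is itself a cocycle whose class is forced to be zero'') is the crux, but the decomposition of $dv$ into its all-diagonal and off-diagonal parts is not respected by $d$, and $[dv]=0$ does not by itself force the class of either part to vanish. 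Closing this requires an honest inductive construction in the spirit of the Halperin--Stasheff bigraded model, which is far more machinery than the statement warrants.

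The truncation argument in the paper dissolves the difficulty: by discarding weights below the diagonal and keeping only cocycles on it, the only surviving piece of $d$ that could land on the diagonal already lands in something exact by construction. That is the idea you are missing, and it is why Lemma~\ref{minimaldec} is unnecessary here.
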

\begin{proof}
Let $\tau \cA$ be the bigraded algebra given by the canonical truncation at weight $n$:
$\tau \cA_p^n=0$ for $p<n$, $\tau \cA^n_p={\cA}^n_p$ for $p>n$ and $\tau \cA^n_n=\Ker(d)\cap {\cA}^n_n$ with the differential induced by that of 
${\cA}$. It is straightforward to verify that $\tau \cA,$ is a sub-cdga of $\cA$.
Furthermore, since $H^n({\cA})_p=0$ for all $p\neq n$, the inclusion is clearly a quasi-isomorphism.
Consider the natural projection $\tau \cA\longrightarrow H({\cA})$
defined by $\tau \cA^n_n=\Ker(d)\cap {\cA}^n_n\twoheadrightarrow H^n(\cA)_n$ and sending 
$\tau \cA^n_p$ to $0$, for all $p\neq n$. 
Again, it is clear that this projection is a quasi-isomorphism of cdga's.
\end{proof}

The existence of positive weight decompositions turns out to be sufficient
to derationalise maps:

\begin{theorem}\label{derationalise} Let  $X$ and $Y$ finite type complexes with $X$ of finite dimension and $Y$ simply connected. Assume that $Y$ admits a positive weight decomposition. 
If $f\colon X\to Y_\bQ$ is a map, then there exists a self homotopy equivalence $h\colon Y_\bQ\to Y_\bQ$ such that the map induced in 
cohomology by $h$ is given by multiplication by certain scalars, and the lifting problem
\[\xymatrix{
& & Y\ar[d] \\
X\ar[r]\ar@{-->}[urr] & Y_\bQ\ar[r]^h & Y_\bQ
}\]
has a solution.
\end{theorem}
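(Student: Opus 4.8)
The plan is to mimic Papadima's argument (\cite[Prop.\ 3.1]{Papadima}), replacing the role of formality by the weaker hypothesis of a positive weight decomposition. First I would pass to algebraic models. By Lemma~\ref{minimaldec}, since $Y$ admits a positive weight decomposition, its minimal Sullivan model $(\Lambda V,d)$ carries a positive weight decomposition $V = \bigoplus_{p>0} V_p$ (with $V^n = \bigoplus_{p>0}V_p^n$ for $n>0$), compatible with $d$ and products. The key algebraic device is the one-parameter family of cdga automorphisms $\phi_t\colon \Lambda V\to \Lambda V$ defined on generators by $\phi_t(v) = t^p v$ for $v\in V_p$ and extended multiplicatively; positivity of the weight decomposition guarantees each $\phi_t$ is well-defined, and compatibility with $d$ and products makes it a cdga morphism, an isomorphism for $t\neq 0$. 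Geometrically (via the Sullivan realisation functor, using that $Y$ is simply connected of finite type so $Y_\bQ$ is modelled by $\Lambda V$), $\phi_t$ realises to a self homotopy equivalence $h = h_t\colon Y_\bQ\to Y_\bQ$ whose effect on $H^n$ is multiplication by $t^n$ on the weight-$n$ part $H^n(\Lambda V)_n$ and by $t^p$ on $H^n(\Lambda V)_p$ — in particular, by scalars on each graded piece, which is the assertion ``$h$ is multiplication by certain scalars'' (one may even arrange a single scalar per cohomological degree by a further truncation, or simply note this is what is needed for the derationalisation step).

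Next I would solve the lifting problem after precomposing $f$ with (the rationalisation of) $h_t$ for suitable $t$. Since $X$ is a finite complex, the map $f\colon X\to Y_\bQ$ is determined up to homotopy by a cdga morphism $\Lambda V\to \cA_{PL}(X)$, and the obstructions to lifting $h_t\circ f$ through the rationalisation $Y\to Y_\bQ$ live in finitely many groups $H^{k+1}(X;\pi_k(Y)\otimes(\bQ/\bZ))$ — more precisely, one works cell by cell over the finitely many cells of $X$, extending the lift over the $k$-skeleton; the obstruction to extending over a $(k{+}1)$-cell is a torsion-or-divisible class, and it is annihilated once one multiplies by a large enough integer, which is exactly the effect of replacing $f$ by $h_N\circ f$ for a suitable integer $N$ (choosing $t=N$ divisible enough at each stage). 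This is the standard derationalisation bookkeeping from \cite{Papadima, Shiga}; the only structural input needed beyond finiteness of $X$ and simple-connectivity of $Y$ is the existence of the self-equivalences $h_t$ inducing multiplication by $t^p$ on weight-$p$ cohomology, and that is supplied by the positive weight decomposition.

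The main obstacle, and the only place where positivity is essential, is guaranteeing that the automorphisms $\phi_t$ exist on a \emph{minimal} model and induce the right maps on cohomology: a non-positive weight decomposition would put generators in non-positive weights, so $\phi_t$ with $t$ an integer would fail to be invertible (or, allowing $t=1/m$, would not produce integer multiplication on cohomology), and the obstruction-killing argument would collapse. This is precisely why Lemma~\ref{minimaldec} is invoked — one needs the weight decomposition not just on \emph{some} model of $Y$ but on its minimal model, so that the generators $V$, which control $\pi_*(Y)\otimes\bQ$ and hence the realisations of cdga automorphisms as self-maps of $Y_\bQ$, all sit in positive weight. Once $\phi_t$ is in hand, the rest is the classical finite-dimensional obstruction theory argument and carries over verbatim from the formal case. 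I would therefore organise the write-up as: (1) reduce to the minimal model and build $\phi_t$; (2) realise $\phi_t$ as $h_t$ and compute its cohomological effect; (3) run the cellwise obstruction argument on $h_N\circ f$ to produce the lift, citing \cite{Papadima} for the details of step (3).
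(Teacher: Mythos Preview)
Your proposal is correct and follows essentially the same route as the paper's proof: both invoke Lemma~\ref{minimaldec} to transfer the positive weight decomposition to the minimal model, produce the grading automorphisms $\phi_\lambda$ (you construct them explicitly, the paper cites \cite[Prop.~2.3]{Body-Mimura-Shiga-Sullivan}), realise them as self-equivalences $h_\lambda$ of $Y_\bQ$, and then run the finite-dimensional obstruction-theory argument from \cite{Papadima} to kill the finitely many torsion obstructions by choosing $\lambda$ divisible enough. One small imprecision: the obstruction groups are $H^{k+1}(X;\pi_k(F))$ for $F$ the homotopy fibre of $Y\to Y_\bQ$, not literally $H^{k+1}(X;\pi_k(Y)\otimes\bQ/\bZ)$, but since you defer the details of this step to \cite{Papadima} anyway this does not affect the argument.
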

\begin{proof}
The proof is parallel to that of Proposition 3.1 of \cite{Papadima}, using the characterisation
of weight decompositions of minimal cdga's of \cite{Body-Mimura-Shiga-Sullivan}.
We sketch the main steps.
By Lemma $\ref{minimaldec}$ we can assume that $Y$ has a minimal model with a positive weight decomposition.
By \cite[Prop.\ 2.3]{Body-Mimura-Shiga-Sullivan},
 for each $\lambda\in \bZ\setminus \{0\}$, there is a self-homotopy equivalence of $Y_\bQ$ such that the 
 induced homomorphism on homotopy groups is given by multiplication by a power of $\lambda$.
 On the other hand, the homotopy of the homotopy fibre of $Y\to Y_{\bQ}$ consists only on 
 finitely many torsion elements in each dimension. Take $\lambda$ to be a multiple of the
 order of all the torsion elements of the homotopy groups in dimension at most $\dim X - 1$.
 Then, the pushforward along $h_\lambda$ of the obstructions to the existence of a lift vanishes, and therefore there is a lift.
\end{proof}

\subsection{Formality and weight decompositions of Thom spaces}
We next use the model of the Thom space given in Theorem $\ref{mainthmThom}$ to relate
weight decompositions on the base with the existence of weight decompositions on the Thom space.

We first study the formality of Thom spaces.
While
Thom spaces of universal bundles of classifying spaces of connected closed subgroups of $GL(\bR^n)$ are formal (see \cite{Papadima}),
in general, this is not the case. We give an example of a non-formal 
Thom space in Example $\ref{exnonformal}$ using the non-vanishing of Massey products.
As a straightforward consequence of Theorem $\ref{mainthmThom}$, we show that formality of the base is transferred to formality of the Thom space.

\begin{proposition} Let $\xi\colon E\to B$ be a vector bundle. If $B$ is formal then the Thom space $\Th(\xi)$ is formal.
\end{proposition}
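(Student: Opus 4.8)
The plan is to deduce formality of $\Th(\xi)$ straight from the explicit models constructed in \S\ref{S2}, by checking that formality of $B$ makes the differential of those models vanish. First I would dispose of the odd-rank case: if the rank $n$ of $\xi$ is odd (and $\Th(\xi)$ is simply connected, so $n\geq 3$, or $n=1$ with $\xi$ orientable), then by the discussion in \S\ref{S2} one has $\Th(\xi)\simeq\Sigma^{n-1}\Th(\ell)$ for a line bundle $\ell$ over $B$, which is a suspension and hence formal; in fact the hypothesis on $B$ is not needed here. So it remains to treat the case $n$ even.

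For even $n$, Theorem~\ref{mainthmThomtwisted} (which specialises to Theorem~\ref{mainthmThom} when $\xi$ is orientable) provides a model of $\Th(\xi)$ of the form $\cA_\ori[e]$, where $\cA$ and $\cA_\ori$ are cdga models of $B$ with trivial and with twisted coefficients in the orientation bundle $\ori$, $e\in\cA_\ori$ is a representative of the twisted Euler class, and $\cA_\ori[e]=s^n\cA_\ori$ carries the differential $d(w_x)=w_{dx}$. The crucial point is that this differential is assembled termwise from the differential of $\cA_\ori$; so if $\cA_\ori$ can be chosen with zero differential, then $\cA_\ori[e]$ has zero differential as well, hence coincides with its own cohomology, and together with the zig-zag of quasi-isomorphisms relating $\cA_\ori[e]$ to $\cA_{PL}(\Th(\xi))$ this displays $\Th(\xi)$ as formal (and recovers $H^*(\Th(\xi);\bQ)\cong s^nH^*(B;\bQ)$ via the Thom isomorphism). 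When $\xi$ is orientable this is immediate: then $\ori$ is trivial, $\cA_\ori=\cA$, and formality of $B$ lets us take $\cA=(H^*(B;\bQ),0)$ with $e$ the genuine Euler class, a cocycle, so that $d(w_x)=w_{dx}=0$.

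It then remains to treat non-orientable bundles of even rank, for which I would produce a zero-differential model $\cA_\ori$ of $B$ with twisted coefficients, compatible with the multiplications $\cA\otimes\cA_\ori\to\cA_\ori$ and $\cA_\ori\otimes\cA_\ori\to\cA$. The natural route is to pass to the orientation double cover $p\colon\hat B\to B$, which is connected since $\xi$ is non-orientable: its deck involution makes $\cA_{PL}(\hat B)$ a cdga with $\bZ/2$-action whose $(+1)$- and $(-1)$-eigenparts are $\cA_{PL}(B)$ and $\cA_{PL}(B;\ori)$, and, since $2$ is invertible in $\bQ$, taking invariants is exact and $\cA_{PL}(B)\xrightarrow{\sim}\cA_{PL}(\hat B)^{\bZ/2}$. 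One then wants the formality of $B$ to upgrade to an \emph{equivariant} formality of $\cA_{PL}(\hat B)$, so that restricting a zero-differential equivariant model to the $(-1)$-eigenpart yields $\cA_\ori=(H^*(B;\ori),0)$. This last step — passing from formality of the quotient $B$ to a zero-differential model of the twisted cochains, i.e.\ a form of equivariant formality for the orientation cover — is the point I expect to require genuine care; everything else follows directly from the models of \S\ref{S2} and the vanishing observation above.
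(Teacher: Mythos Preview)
For the cases the paper actually argues, your proof is the same as theirs. The paper's one-sentence proof takes $\cA=H^*(B;\bQ)$ with zero differential and observes that the model $\cA[e]=(s^nH^*(B;\bQ),0)$ from Theorem~\ref{mainthmThom} then has zero differential; together with the odd-rank discussion in \S2.2 this is exactly your argument. Your explicit case split (odd rank via suspension, oriented even rank via $\cA[e]$) just makes the paper's implicit division visible.

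Where you go beyond the paper is the non-orientable even-rank case. You are right to isolate it: the paper's proof literally invokes the model $(s^nH^*(B),0)$, which is produced by Theorem~\ref{mainthmThom} only under an orientability hypothesis; Theorem~\ref{mainthmThomtwisted} instead gives $\cA_\ori[e]$, and nothing in the paper supplies a zero-differential choice of $\cA_\ori$ from formality of $B$ alone. Your proposed route through the orientation double cover $\hat B$ is the natural one, but the step you flag as delicate is genuinely problematic: formality of $B=\hat B/(\bZ/2)$ does \emph{not} in general force the $\bZ/2$-action on a model of $\hat B$ to be equivariantly formal (equivalently, it does not force $\cA_{PL}(B;\ori)$ to be formal as a dg $\cA_{PL}(B)$-module). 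So this case is not closed by your sketch, and it is not closed by the paper's proof either; one should either add an orientability hypothesis in even rank or supply a separate argument for the twisted model.
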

\begin{proof} If $B$ is formal, then we take $\cA = H^*(B)$ with trivial differential as a cdga model of $B$, 
and the theorem gives that $(s^n H^*(B),0)$ with the multiplication given by 
the Thom isomorphism theorem, is a model of $\Th(\xi)$, hence $\Th(\xi)$ is formal.
\end{proof}
 
\begin{remark} The Thom space of a vector bundle of rank $n$ is $(n-1)$-connected and, therefore, if the dimension of the base is at most $2n-2$ (so that the dimension of the Thom space is at most $3n-2$), one can use \cite[Corollary~5.16]{Halperin-Stasheff} to deduce that $\Th(\xi)$ is intrinsically formal. 
For instance, this covers the case of tangent bundles of manifolds.
A modification of this argument was used in the first author's thesis to prove that the Thom space
$\Th(\gamma_{2,n}^\perp)$ of the complement of the tautological bundle over the oriented Grassmannian
of $2$-planes in $\bR^{n+2}$ is intrinsically formal. The $n$-fold loop space of $\Th(\gamma_{2,n}^\perp)$ 
appeared in \cite{CR-W} as the target of the scanning map of the moduli space of surfaces in $\bR^n$, and 
the motivation to write this paper came at first from the need of a description of the rational homotopy
type of the target of the scanning map for moduli spaces of high dimensional submanifolds of $\bR^n$. 
This target is the $n$-fold loop space of a certain Thom space $\Th(\theta^*\gamma_{d,n}^\perp)$ similar 
to $\Th(\gamma_{d,n}^\perp)$. As long as $d>2$, the argument of Halperin and Stasheff does not apply to these Thom spaces. 
\end{remark}

As for the transfer of weight decompositions, we have:

\begin{proposition}\label{cincopuntotres} Let $\xi\colon E\to B$ be a vector bundle. Assume that $B$ admits a (positive) weight decomposition
such that the Euler class is homogeneous in the decomposition.
Then $\Th(\xi)$ admits a (positive) weight decomposition.
\end{proposition}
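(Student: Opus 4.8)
The plan is to build the weight decomposition on the model $\cA[e]$ provided by Theorem \ref{mainthmThom}, using Lemma \ref{minimaldec} to reduce first to a minimal model of $B$. Concretely: by Lemma \ref{minimaldec}, a minimal model $\cM\to\cA_{PL}(B)$ inherits a (positive) weight decomposition $\cM^k=\bigoplus_p\cM^k_p$; and by functoriality of the construction $\cM\mapsto\cM[e]$ (Lemma \ref{lemma:thom2}) together with the fact that the weight decomposition on $B$ can be chosen so that the Euler class is represented by a homogeneous cocycle $e\in\cM^n_{p_0}$ for some weight $p_0$, it suffices to put a weight decomposition on $\cM[e]$. By Theorem \ref{mainthmThom}, $\cM[e]$ is a model of $\Th(\xi)$, so this proves the claim.

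First I would define the bigrading on $\cM[e]=s^n\cM$ by \emph{shifting the weight by $p_0$}: declare $w_x\in\cM[e]$ to have weight $p+p_0$ whenever $x\in\cM_p$. Then I check compatibility with the differential and product of Definition \ref{defM}. For the differential: $d(w_x)=w_{dx}$ and $dx$ has the same weight $p$ as $x$ in $\cM$, so $w_{dx}$ has weight $p+p_0$, matching $w_x$. For the product: $\mu(w_x,w_y)=w_{exy}$, and if $x$ has weight $p$ and $y$ has weight $q$ in $\cM$, then $exy$ has weight $p_0+p+q$ in $\cM$, so $w_{exy}$ has weight $p_0+p+q+p_0=(p+p_0)+(q+p_0)$, which is exactly the sum of the weights of $w_x$ and $w_y$. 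Hence the conditions of Definition \ref{defweightdec} hold and $\cM[e]$ carries a weight decomposition.

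Finally I would verify positivity in the positive case. Here the main point — and the only genuine subtlety — is to control the weight $p_0$ of the Euler class and the degree-zero part. Since the weight decomposition of $\cM$ is positive, $\cM^0=\cM^0_0=\bQ$ and $\cM^k_p=0$ for $p\le 0$ when $k>0$; as $n>0$ (even rank), the Euler class lives in $\cM^n$, so $e$ is homogeneous of some weight $p_0>0$. Now $\cM[e]^k=s^n\cM^{k-n}$ is concentrated in degrees $k\ge n\ge 2$, so there is nothing in degree $0$ to worry about (the degree-$0$ part of the model of $\Th(\xi)$ is the extra copy of $\bQ$ added by the construction $\cA_{PL}(\Th(\xi))^0=\bQ$, of weight $0$). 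For $k\ge n$, an element $w_x$ in degree $k$ comes from $x\in\cM^{k-n}$; if $k-n>0$ then $x$ has weight $p>0$, so $w_x$ has weight $p+p_0>0$, and if $k-n=0$ then $k=n$ and $w_x$ comes from $x\in\cM^0_0=\bQ$, so $w_x$ has weight $p_0>0$. Thus every element in positive degree has strictly positive weight, and the decomposition on $\cM[e]$ is positive. I expect the only place requiring care is precisely this bookkeeping of where the Euler class sits and the observation that shifting all weights by the positive constant $p_0$ preserves — indeed is needed for — positivity; the non-positive case needs no such check and follows immediately from the shift construction.
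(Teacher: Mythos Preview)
Your proof is correct and follows the same approach as the paper's: shift the weight of each $w_x$ by the weight $p_0$ of the Euler class and verify compatibility with the differential and product on $\cA[e]$. The detour through a minimal model via Lemma~\ref{minimaldec} is unnecessary---any cdga model with a weight decomposition in which $e$ has a homogeneous representative works directly---though your explicit check of positivity is more thorough than the paper's own argument, which leaves that verification implicit.
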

\begin{proof}
Define the weight $||w_x||$ of $w_x$ as the weight of $x$ plus the weight of the Euler class: 
$||w_x||=||x||+||e||$.  This gives a well-defined weight decomposition on $\Mm$, since 
\[||dw_x||=||w_{dx}|| = ||dx|| + ||e||=||x|| + ||e|| = ||w_x||\]
 and
\[
||w_x\cdot w_y|| = ||w_{exy}|| = ||exy|| + ||e||
= ||ex||+||ey|| = ||w_x||+||w_y||.\qedhere\]
\end{proof}

\subsection{Homology classes representable by submanifolds}
Given an oriented closed manifold $M$, a cohomology class $c\in H^q(M;\bZ)$ and an oriented submanifold $W\subset M$ of codimension $q$, we say that \textit{$W$ represents $c$} if the fundamental class of $W$ is Poincaré dual to $c$. By work of Thom \cite{Thom:quelques}, this happens if and only if $c$ is the pullback of the Thom class under the Pontryagin-Thom collapse map associated to $W\subset M$:
\[M\lra \Th(\nu W) \lra\Th(\gamma_{q}).\]
Here $\nu W$ denotes the normal bundle of $W$ in $M$ and $\gamma_{q}$ is the universal bundle over $B\SO(q)$.
More generally, it is natural to ask under which conditions a cohomology class $c\in H^q(M)$ is representable by a submanifold. 
This is equivalent to asking the conditions for the existence of a map $\phi\colon M\to \Th(\gamma_{q})$ with $\phi^*(u)=c$. This question was answered by Thom himself.

A \emph{structure} is a fibration $\theta\colon X\to BSO(q)$. A \emph{$\theta$-structure} on a vector bundle $\xi\colon X\to B$ is a lift of the classifying map of $\xi$ along the fibration $\theta$. This factorisation gives a pair of pullback diagrams
\[\xymatrix{
E\ar[d]\ar[r] & \theta^*\gamma_{q} \ar[d]\ar[r] & \gamma_{q}\ar[d] \\
B\ar[r] & X \ar[r] & \Gr_q(\bR^\infty) \rlap{ $= B\SO(q)$}
}\]
and therefore a factorisation
\[\Th(\xi)\lra \Th(\theta^*\gamma_{q})\lra \Th(\gamma_{q}).\]

Examples of $\theta$-structures are framings (when $B =  V_{k,n}$ is the Stiefel manifold of $k$-frames in $\bR^\infty$ and $\theta$ is the map that sends a frame to the oriented plane that it spans), spin structures (when $B$ is the $2$-connected cover of $BSO(q)$) or complex structures (when $q=2k$ and $B = BU(k)$).

We have that a cohomology class $c$ can be represented by a submanifold $W$ with a $\theta$-structure
in its normal bundle if and only if there is a map $\phi\colon M\to \Th(\theta^*\gamma_{d,\infty})$ such that $\phi^*(u)=c$,
where $u$ denotes the Thom class.

Let $M$ be an oriented closed manifold and $\theta\colon B\to BSO(n)$ a structure. Let $\xi$ be
the vector bundle classified by $\theta$. 
Recall from the introduction, that a cohomology class $c\in H^n(M)$ can be represented by a $\theta$-submanifold of
codimension $n$ if and only if the following lifting problem has a solution:
 \[\xymatrix{
& \Th(\xi)\ar[d]^{u} \\
M\ar@{-->}[ur]\ar[r]^-{c} & K(\bZ,n),
}\]
where the vertical arrow is the Thom class. 

The following Theorem asserts that, when $B$ has a positive weight decomposition,
it is enough to solve the above lifting problem at the rational level.

\begin{theorem}\label{representability} Let $\theta\colon B\to BSO(n)$ be a structure and let $\xi\colon E\to B$ be the resulting vector bundle. Suppose that $B$ admits a positive weight decomposition and that the Euler class is homogeneous in the decomposition. Let $M$ be an oriented closed submanifold, and let $c\in H^n(M)$ be a cohomology class. Then there exists a multiple $\lambda c$ with $\lambda\in \bZ\setminus \{0\}$ that is representable by a $\theta$-submanifold if and only if the lifting problem
 \begin{equation*}\xymatrix{
& \Th(\xi)_\bQ\ar[d]^{u} \\
M\ar@{-->}[ur]\ar[r]^-{c} & K(\bZ,n).
}\end{equation*}
has a solution.
\end{theorem}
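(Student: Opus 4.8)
The plan is to reduce the statement to Theorem~\ref{derationalise} applied to the map $Y = \Th(\xi)$, exploiting the hypotheses to verify its assumptions. First I would observe that one direction is trivial: if $\lambda c$ is representable by a $\theta$-submanifold, then there is a lift $\phi\colon M\to \Th(\xi)$ of $\lambda c$ along the Thom class, and postcomposing with the rationalisation $\Th(\xi)\to \Th(\xi)_\bQ$ gives a rational lift of $\lambda c$; since rationally $\lambda c$ and $c$ span the same line in $H^n(M;\bQ)$, precomposing with a self-equivalence of $K(\bZ,n)_\bQ$ (or simply rescaling) produces the desired rational lift of $c$ itself. The substance is the converse.

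For the converse, suppose the rational lifting problem has a solution $\psi\colon M\to \Th(\xi)_\bQ$ with $u\circ\psi \simeq c$ (rationally). The key inputs are: (i) $\Th(\xi)$ is simply connected, since $\theta\colon B\to BSO(n)$ is a structure with $n\geq 1$ and so the bundle $\xi$ has rank $n$; here I should note that for the statement to be non-vacuous one needs $n\geq 2$ so that $\Th(\xi)$ is simply connected and the localisation theory applies, as discussed in Section~\ref{S2}; (ii) $\Th(\xi)$ admits a positive weight decomposition, which follows from Proposition~\ref{cincopuntotres} using the hypothesis that $B$ admits a positive weight decomposition in which the Euler class is homogeneous; and (iii) $M$ is a finite complex of finite dimension, being a closed manifold. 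With these in hand, Theorem~\ref{derationalise} applied to $Y = \Th(\xi)$ and the map $\psi\colon M\to \Th(\xi)_\bQ$ yields a self homotopy equivalence $h\colon \Th(\xi)_\bQ\to \Th(\xi)_\bQ$ acting on cohomology by multiplication by nonzero scalars, together with a genuine lift $\tilde\phi\colon M\to \Th(\xi)$ of $h\circ\psi$.

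Now I would track the Thom class through this construction. Let $\lambda\in\bZ\setminus\{0\}$ be the scalar by which $h^*$ acts on the degree-$n$ part of $H^*(\Th(\xi)_\bQ)$ containing the image of $u$ (or a suitable multiple clearing denominators). Composing $\tilde\phi$ with the Thom class $u\colon \Th(\xi)\to K(\bZ,n)$ gives a map $M\to K(\bZ,n)$ classifying $\tilde\phi^*(u)$. Chasing the diagram, $\tilde\phi^*(u)$ is a nonzero rational multiple of $\psi^*u = c$; after multiplying $\tilde\phi$ by an integer to clear denominators one obtains an honest map $\phi\colon M\to \Th(\xi)$ with $\phi^*(u) = \mu c$ for some $\mu\in\bZ\setminus\{0\}$. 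Finally, unwinding the definition of $\theta$-submanifold via the factorisation $\Th(\xi)\to\Th(\theta^*\gamma_n)\to\Th(\gamma_n)$ and Thom's classical criterion, the existence of such a $\phi$ is exactly the statement that $\mu c$ is representable by a $\theta$-submanifold of $M$ of codimension $n$.

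The main obstacle is bookkeeping rather than conceptual: one must carefully check that the positive weight decomposition hypothesis on $B$ with homogeneous Euler class really does feed into Proposition~\ref{cincopuntotres} as stated, and that the scalar multiplications coming from $h^*$ and from clearing denominators can all be absorbed into a single nonzero integer $\lambda$ without affecting representability (multiples of representable classes are representable, since one can take disjoint copies or use that the Pontryagin--Thom construction is compatible with the $H$-space structure on $\Omega^\infty$-level, or more elementarily since $\mu c$ representable and $\nu \mid \mu$ does not obviously give $\nu c$ representable --- so one only claims representability of \emph{some} nonzero multiple, which is all the statement asks). I would also remark that the simple-connectivity requirement forces $n\geq 2$, and this should be stated as a standing hypothesis, exactly as in Papadima's original result.
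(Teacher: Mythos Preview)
Your proposal is correct and follows essentially the same approach as the paper: invoke Proposition~\ref{cincopuntotres} to transfer the positive weight decomposition from $B$ to $\Th(\xi)$, then apply Theorem~\ref{derationalise} to derationalise the given lift. The paper's proof is in fact much terser than yours---it records only these two steps and omits the tracking of the Thom class, the scalar bookkeeping, and the easy direction---so your added detail is welcome rather than superfluous.
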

\begin{proof}
By Proposition $\ref{cincopuntotres}$, the positive weight decomposition on $B$ induces a positive weight decomposition on the Thom space $\Th(\xi)$.
Therefore by Theorem $\ref{derationalise}$, 
the lifting problem
$$
\begin{gathered}
\xymatrix{
& \Th(\xi)\ar[d] \\
M\ar@{-->}[ur]\ar[r] & \Th(\xi)_\bQ.
}\end{gathered}
$$
has a solution.
\end{proof}

\begin{corollary}

\end{corollary}

\subsection{Massey products of Thom spaces}
As is well-known, formality is closely linked to the vanishing of higher Massey products in cohomology.
We end this section by studying Massey products of Thom spaces and the effect of the canonical inclusion on them.

\begin{lemma}\label{lemma:massey3} The triple Massey product $(w_x,w_y,w_z)$ in $H^*(\Th(\xi))$ is in canonical bijection with $e\cdot (x,ey,z) = (ex,y,ez)\in H^*(B)$, and this bijection preserves $0$.
\end{lemma}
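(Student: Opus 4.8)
The plan is to work directly in the cdga model $\cA[e]$ of $\Th(\xi)$ from Theorem~\ref{mainthmThom}, where $\cA = \Apl(D(E))$ and the product is $\mu(w_x,w_y) = w_{exy}$, and to compute the Massey product of three cohomology classes $[w_x],[w_y],[w_z]$ with $[w_x]\cdot[w_y] = 0 = [w_y]\cdot[w_z]$ directly from the definition. Recall that the Massey product $(w_x,w_y,w_z)$ is defined by choosing $s,t\in\cA[e]$ with $ds = \mu(w_x,w_y) = w_{exy}$ and $dt = \mu(w_y,w_z) = w_{eyz}$, and then forming the cocycle $\mu(s,w_z) \pm \mu(w_x,t)$, whose class in $H^*(\Th(\xi))$ modulo the indeterminacy ideal is the Massey product. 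First I would unwind the hypothesis $[w_x]\cdot[w_y]=0$: since $d(w_a) = w_{da}$, writing $s = w_\sigma$ for $\sigma\in\cA$, the condition $ds = w_{exy}$ becomes $d\sigma = exy$ in $\cA$; similarly $t = w_\tau$ with $d\tau = eyz$. These are exactly the conditions that make $\sigma$ a defining system for the Massey product $(ex,y,ez)$ in $H^*(B)$ (equivalently, after the even-degree class $e$ slides around, for $e\cdot(x,ey,z)$; note $d\sigma = (ex)y = x(ey)$ and $d\tau = y(ez) = (ey)z$ by commutativity and $de=0$, so $\sigma,\tau$ simultaneously serve as a defining system for $(ex,y,ez)$).

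Next I would assemble the representing cocycle on the Thom-space side: $\mu(w_\sigma,w_z) \pm \mu(w_x,w_\tau) = w_{e\sigma z}\pm w_{ex\tau} = w_{e(\sigma z\pm x\tau)}$. The element $\sigma z \pm x\tau$ (with the sign matching the Massey-product convention) is precisely a representative of the Massey product $(ex,y,ez)$ in $\cA$ coming from the defining system $(\sigma,\tau)$: indeed the standard representative of $(ex,y,ez)$ is $\sigma\cdot z \pm (ex)\cdot\tau'$ where $d\tau' = y\cdot(ez)$, and $\tau=\tau'$ works, while $\sigma\cdot(ez) = e\sigma z$ reconciles with $e(\sigma z\pm x\tau)$ up to the identification $w_{(-)}$. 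So the representing cocycle of $(w_x,w_y,w_z)$ is $w_{\rho}$ where $\rho = e\cdot r$ and $r$ is a representative of $(ex,y,ez)$; multiplying by the further $e$ coming from $\mu$ gives the extra factor, explaining why the bijection is with $e\cdot(x,ey,z) = (ex,y,ez)$ rather than with $(x,y,z)$. Thus the assignment $[w_r]\mapsto [w_{er}] = [\mu(\cdot)]$ is the claimed bijection at the level of representatives.

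Then I would check that this descends to a genuine bijection on the Massey-product sets modulo indeterminacy. The indeterminacy of $(w_x,w_y,w_z)$ is $[w_x]\cdot H^*(\Th(\xi)) + H^*(\Th(\xi))\cdot [w_z]$, which under $g$ and the Thom isomorphism $H^*(B)\cong \tilde H^{*+n}(\Th(\xi))$, $[a]\mapsto[w_a]$, corresponds to $(ex)\cdot H^*(B) + H^*(B)\cdot(ez)$ — exactly the indeterminacy of $(ex,y,ez)$, again with a factor of $e$ absorbed by the Thom-space product. Here it is important that the Thom isomorphism $[w_a]\mapsto [a\cup u]$ of Proposition~\ref{mainpropThom} is a ring isomorphism onto its image intertwining the Thom-space product with multiplication-by-$e$ on $\cA$, which is what produces the $e$-twist consistently across representatives and indeterminacy. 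Finally, that the bijection "preserves $0$'' is the statement that $0 \in (w_x,w_y,w_z)$ iff $0\in e\cdot(x,ey,z)$, which follows since $0$ is a representative on one side iff the corresponding $w_{(-)}$ is exact on the other, both being governed by the same coboundary condition in $\cA$.

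The main obstacle I expect is purely bookkeeping: pinning down the signs in the Massey-product convention so that "$\sigma z \pm x\tau$'' is literally a representative of $(ex,y,ez)$ (and checking the degree shifts $|w_a| = |a|+n$ interact correctly with those signs, using $n$ even throughout), and verifying that the two independent slides of the even class $e$ — one identifying $e\cdot(x,ey,z)$ with $(ex,y,ez)$, one coming from the Thom-space product — are compatible and do not over- or under-count a factor of $e$. Once the conventions are fixed, every step is a direct translation through the explicit model $g\colon\cA[e]\to\Apl(D(E),S(E))$.
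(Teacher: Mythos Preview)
Your approach is correct and essentially identical to the paper's: both compute the Massey product directly in the model $\cA[e]$, translating the defining-system conditions $d(w_\lambda)=w_xw_y$, $d(w_\mu)=w_yw_z$ into $d\lambda=exy$, $d\mu=eyz$ and the representative $w_xw_\mu-w_\lambda w_z$ into $w_{e(x\mu-\lambda z)}$, then reading off the bijection with $e\cdot(x,ey,z)$ via the Thom isomorphism $[a]\leftrightarrow[w_a]$. The paper's write-up is slightly more economical in that it treats the Massey product as a set of classes from the outset (a chain of equalities of subsets of $H^*$), so representatives and indeterminacy are handled simultaneously and your separate indeterminacy check becomes unnecessary; note also a small slip in your middle paragraph --- with $\rho=e(x\tau-\sigma z)$ one has $\rho$ itself a representative of $(ex,y,ez)$ (equivalently $\rho=e\cdot r$ with $r\in(x,ey,z)$), not $\rho=e\cdot r$ with $r\in(ex,y,ez)$.
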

\begin{proof}
The Massey product is
\begin{align*}
(w_x,w_y,w_z) &= \{[w_x w_\mu  - w_\lambda  w_z]\mid d(w_\lambda) = w_xw_y\text{ and }d(w_\mu)=w_yw_z\} \\
 &= \{[w_{ex\mu-e\lambda z}]\mid d(\lambda) = exy \text{ and }d(\mu) = eyz\} \\
 &= \{[w_{e(x\mu-\lambda z)}]\mid d(\lambda) = x(ey) \text{ and }d(\mu) = (ey)z\} \\
 &= \{[w_{eu}]\mid u\in (x,ey,z)\} 
\end{align*}
The latter subset of $H^*(\Th(\xi))$ is in bijection with $e\cdot (x,ey,z)\subset H^*(B)$ because $d(w_{eu}) = w_{d(eu)}$. Moreover, this bijection preserves $0$. The case of $(ex,y,ez)$ is done similarly.
\end{proof}
As a direct consequence, we have that not every Thom space is formal. 

\begin{example}\label{exnonformal}
Let $X$ be the cofibre of a non-trivial triple Whitehead product $S^4\to S^2\vee S^2$, and let $\xi\colon E\to X\times \CP^\infty$ be a vector bundle with Euler class $e$ the pullback of the generator of $H^2(\CP^\infty)$. Let $x,y$ be the pullback of the natural generators of $H^2(S^2\vee S^2)$. Then, writing down the minimal model of $X\times \CP^\infty$, one rapidly realises that $(x,ex,y)$ is a non-empty Massey product that does not contain zero. Since multiplication by $e$ is injective, the set $e(x,ex,y)$ is also non-empty and does not contain zero, and so does $(w_x,w_x,w_y)$, therefore $\Th(\xi)$ is not formal.
\end{example}

\begin{remark}
In \cite{Rudyak-Tralle}, a method to produce non-formal symplectic manifolds using Massey products in Thom spaces was introduced. Their argument, which assumes
a pre-existing non-formal symplectic manifold, is the following:
\begin{enumerate}
\item Produce a smooth embedding $i\colon M\to X$ between symplectic manifolds such that $M$ has a non-trivial triple Massey product.
\item Obtain the blow up $\widetilde{X}$ of $X$ along $i$, which is a symplectic manifold that comes with a smooth embedding $\tilde{i}\colon \bP(\nu M)\to \widetilde{X}$ of codimension $2$.
\item Prove that if $M$ has a non-trivial triple Massey product $(x,y,z)$, then $\bP(\nu M)$ has a non-trivial triple Massey product $(ex,ey,ez)$ as long as the complex codimension of $M$ in $X$ is at least $4$.
\item Prove that under the composition $\bP(\nu M)\to \widetilde{X} \to \Th(\nu(\bP(\nu M)))$, the triple Massey product $(w_x,w_y,w_z)$ exists and is mapped to $(ex,ey,ez)$. 
\item Therefore, $(w_x,w_y,w_z)$ is also non-trivial, and there is an intermediate non-trivial triple Massey product in $\widetilde{X}$, so $\widetilde{X}$ is a new non-formal symplectic manifold.
\end{enumerate}
\end{remark}
For higher Massey products there are no nice formulas, but we get maps: First, there is a map
\[e^{k-1}(a_{ii})_{i=1}^k\lra (w_{a_{ii}})_{i=1}^k,\]
but this map can be improved, as it factors through the map in the next lemma. 
\begin{lemma}\label{lemma:masseysup} Let $\varphi(i)$ be $0$ if $i$ is even and $1$ if $i$ is odd, and let $\gamma(i)=\varphi(i)-1$. There are injective pointed maps between Massey products in $B$ and Massey products in $\Th(\xi)$:
\begin{align*}
e^{\lfloor\frac{k-2}{2}\rfloor}\left(e^{\varphi(i)}x_{ii}\right)_{i=1}^k&\lra \left(x_{ii}\cup u\right)_{i=1}^k\\
e^{\lceil\frac{k-2}{2}\rceil}\left(e^{\gamma(i)}x_{ii}\right)_{i=1}^k&\lra \left(x_{ii}\cup u\right)_{i=1}^k.
\end{align*}
when $k=3$, these are the bijections of Lemma \ref{lemma:massey3}. The composition of this map and the pullback along the canonical inclusion is given by multiplication by $e$.
\end{lemma}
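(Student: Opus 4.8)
The strategy is to reduce everything to the explicit cochain-level model of Theorem~\ref{mainthmThom}: $\cA[e] = (s^n\cA, d, \mu)$ with $\mu(w_x, w_y) = w_{exy}$, quasi-isomorphic to $\tilde\cA_{PL}(\Th(\xi))$ via $g(w_x) = x \cup u$. A $k$-fold Massey product $(w_{a_{11}}, \dots, w_{a_{kk}})$ is computed from a \emph{defining system}: a collection of elements $w_{a_{ij}}$ for $i < j$ (with the convention that $w_{a_{ii}}$ are the given cocycles) satisfying $d(w_{a_{ij}}) = \sum_{i \le \ell < j} \bar{w}_{a_{i\ell}} \cdot w_{a_{\ell+1,j}}$, where bars denote the usual sign twist $\bar{x} = (-1)^{|x|}x$. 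Under $\mu$, the product $\bar{w}_{a_{i\ell}} \cdot w_{a_{\ell+1,j}}$ equals $w_{e \bar a_{i\ell} a_{\ell+1,j}}$ (up to sign, absorbed since $|e|$ is even), so a defining system in $\cA[e]$ is exactly a collection $\{a_{ij}\}$ in $\cA$ with $d(a_{ij}) = \sum e \bar a_{i\ell} a_{\ell+1,j}$. The plan is to \emph{renormalize}: set $b_{ij} := e^{c(i,j)} a_{ij}$ for suitable integer exponents $c(i,j)$ so that the relations $d(b_{ij}) = \sum \bar b_{i\ell} b_{\ell+1,j}$ hold in $\cA$; this exhibits $\{b_{ij}\}$ as a defining system for a Massey product in $B$, and tracking the resulting element of $H^*(\cA[e])$ back through $g$ will produce the two displayed maps.

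First I would write down the telescoping constraint on the exponents: for the relation at $(i,j)$ to become an honest (untwisted) Massey relation after dividing by an appropriate power of $e$, we need $c(i,j) = c(i,\ell) + c(\ell+1,j) + 1$ for all $\ell$, with $c(i,i) = 0$ fixed by the outer legs. Solving this recursion forces $c(i,j)$ to depend only on $j - i$; writing $c(i,j) = \psi(j-i)$ one gets $\psi(m) = \psi(a) + \psi(m-a) + 1$, i.e.\ $\psi(m) + 1$ is additive, so $\psi(m) = m - 1$. That is the source of the total power: the full product $(w_{a_{11}}, \dots, w_{a_{kk}})$ carries $\psi(k-1) = k-2$ powers of $e$ beyond what the untwisted relations use. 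But—and this is the key refinement producing the two maps rather than one—we have freedom in \emph{which} leg $w_{a_{ii}}$ we assign to the untwisted ``$a_{ii}$'' versus a twisted ``$e a_{ii}$''. Alternating the assignment (odd legs twisted vs.\ even legs twisted, which is what $\varphi$ and $\gamma$ encode) lets us distribute the $e$'s so that the \emph{external} power drops: absorbing $\lfloor (k-2)/2\rfloor$ (resp.\ $\lceil (k-2)/2 \rceil$) of them into alternate input legs, leaving the stated external exponents. I would verify by direct substitution that with $x_{ii}' := e^{\varphi(i)} x_{ii}$ (resp.\ $e^{\gamma(i)} x_{ii}$) and the telescoping $c(i,j)$, the relations close up in $\cA$; this is bookkeeping with the staircase identities and the evenness of $|e|$.

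Next, pointedness and injectivity. Pointedness ($0 \mapsto 0$) follows because if the source Massey product contains $0$, it has a defining system whose top element is exact in $\cA$, and multiplying the entire system by the relevant powers of $e$ produces a defining system in $\cA[e]$ whose top element $w_{e^{?}(\cdots)}$ is $d$ of $w_{e^{?}(\cdots)}$ since $d$ commutes with $w_{(-)}$ and $de = 0$ — exactly as in the last line of the proof of Lemma~\ref{lemma:massey3}. Injectivity: two defining systems in $\cA[e]$ differing by the ambiguity subgroup correspond, after dividing by the fixed power of $e$, to defining systems in $\cA$ differing by the same pattern, and since multiplication by $e$ on $\cA$ need not be injective in general this requires care — but one only needs injectivity of the \emph{induced map of sets} on the displayed cosets, and that is seen by noting the renormalization is a bijection onto its image at the cochain level (it is literally ``multiply each $a_{ij}$ by a fixed power of $e$'') and passes to cohomology. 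Finally, for the $k=3$ specialization: $\psi(1) = 0$, so there is no external $e$, $\varphi$-twisting puts one $e$ on the middle leg giving $(x, ey, z)$ and $\gamma$-twisting puts one $e$ each on the outer legs giving $(ex, y, ez)$ — precisely the two forms in Lemma~\ref{lemma:massey3}. For the composite with $\iota^*$: applying $\iota^*$ to $g(w_{v}) = v \cup u$ gives $v \cdot e$ by Lemma~\ref{productemagic}, so the Massey product in $\Th(\xi)$ maps under $\iota^*$ to $e$ times its preimage, which after unwinding the powers of $e$ is multiplication by $e$ on the nose; I would state this as a short corollary of the naturality square for $\iota^*$ applied term-by-term to a defining system.

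\textbf{Main obstacle.} The genuine difficulty is \emph{not} the arithmetic of the exponents (that is forced, as above) but keeping the \emph{signs} in the defining-system relations under control when passing through $\mu(w_x, w_y) = w_{exy}$ and the $n$-th suspension $s^n$. The suspension shifts degrees by $n$, and although $n$ is even so that $(-1)^{|w_a|} = (-1)^{|a|}$ (the fact invoked repeatedly in Lemma~\ref{lemma:thom1}), the products inside a defining system involve the \emph{twisted} products $\bar{w}_{a_{i\ell}} \cdot w_{a_{\ell+1,j}}$ where the bar is taken in $\cA[e]$, not $\cA$; one must check this matches the bar taken in $\cA$ after the renormalization, and that the alternating $\varphi$/$\gamma$ assignment does not introduce an uncontrolled sign discrepancy between the two maps. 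I expect this to be a finite but delicate sign audit, best handled by fixing once and for all the Kraines/May sign convention for defining systems and then verifying the single identity $\mu(\overline{w_{b_{i\ell}}}, w_{b_{\ell+1,j}}) = w_{\,\overline{b'_{i\ell}} \, b'_{\ell+1,j}}$ with the primed (renormalized, alternately-twisted) elements — everything else is formal consequence.
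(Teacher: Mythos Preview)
Your approach is essentially the paper's: both construct the map by renormalising a defining system by powers of $e$, governed by the same additivity constraint $c(i,j)=c(i,\ell)+c(\ell+1,j)+1$; the paper simply writes down the alternating solution $s(i,j)=\lfloor (j-i-\varphi(i))/2\rfloor$ directly rather than first finding the generic $\psi(j-i)=j-i-1$ and then refining. One simplification worth adopting: injectivity is immediate once you observe (as the paper does) that on cohomology the map is literally $z\mapsto z\cup u$, the Thom isomorphism restricted to the source subset, so no separate cochain-level argument is needed; and your sign worries, while reasonable, are dealt with in the paper by the blanket observation (which you already make) that $n$ even gives $(-1)^{|w_a|}=(-1)^{|a|}$, after which the verification is the one-line computation using property \eqref{prop:11}.
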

\begin{proof}

Let $(A,d)$ be a cdga, and let $\{x_{i,i}\}_{i=1}^k\subset A$. A \emph{Massey system} for $\{x_{i,i}\}_{i=1}^k$ is a pair of sequences 
\[\{a_{i,j}\}_{1\leq i\leq j\leq k},\quad \{m_{i,j}\}_{1\leq i\leq j\leq k, (i,j)\neq (1,k)}\]
in $A$, satisfying that
\begin{enumerate}
\item $a_{i,i} = 0$,
\item $m_{i,i} = x_{i,i}$, 
\item $d(m_{i,j}) = a_{i,j}$,
\item $a_{i,j} = \sum_{l=i}^{j-1} \pm m_{i,l}m_{l+1,j}$.
\end{enumerate}
The product associated to this system is $a_{1,k}$. Finally, the Massey product of $(x_{i,i})_{i=1}^k$ is the set of all products of Massey systems for $(x_{i,i})_{i=1}^k$. 

Define, depending on whether we are constructing the first or the second map,
\[s(i,j) = \lfloor \frac{j-i-\varphi(i)}{2}\rfloor, \quad s(i,j) = \lfloor \frac{j-i-\gamma(i)}{2}\rfloor\]
and observe that 
\begin{enumerate}
\item\label{prop:11} $s(i,j) = s(i,l) + s(l+1,j)+1$,
\item $s(1,k) = \lfloor\frac{k-2}{2}\rfloor$ for the first map and $s(1,k) = \lceil\frac{k-2}{2}\rceil$ for the second map.
\end{enumerate}
Now, let $(A,d)$ be a cdga model of $B$ and let $(T,\delta)$ be the model of $\Th(\xi)$ given by Theorem \ref{mainthmThom}. Let $y_{i,i} = e^{\varphi(i)}x_{i,i}$ (resp.\ $y_{i,i} = e^{\gamma(i)}x_{i,i}$), and let $\{a_{i,j}\}$, $\{m_{i,j}\}$ be a Massey system for $\{y_{i,i}\}_{i=1}^k\subset A$. We claim that 
\begin{equation}\label{eq:4}
\left\{\left(e^{s(i,j)}a_{i,j}\right)\cup \bar{u}\right\}, \left\{\left(e^{s(i,j)}m_{i,j}\right)\cup \bar{u}\right\}
\end{equation}
is a Massey system for $\{x_{i,i}\cup \bar{u}\}_{i=1}^k\subset T$. The first three conditions are straightforward to check, and for the fourth one uses the additivity property \eqref{prop:11} of $s(i,j)$ remarked before:
\begin{align*}
\left(e^{s(i,j)}a_{i,j}\right)\cup \bar{u} &= \left(e^{s(i,j)}\sum_{l=i}^{j-1} \pm m_{i,l}m_{l+1,j}\right)\cup \bar{u} \\
&= \left(\sum_{l=i}^{j-1} \pm e\cdot\left(e^{s(i,l)}m_{i,l}\right)\cdot \left(e^{s(l+1,j)}m_{l+1,j}\right)\right)\cup \bar{u} \\
&= \sum_{l=i}^{j-1} \pm \left(\left(e^{s(i,l)}m_{i,l}\right)\cup \bar{u}\right)\cdot\left(\left(e^{s(l+1,j)}m_{l+1,j}\right)\cup \bar{u}\right)
\end{align*}

This defines a map from Massey systems of $\{y_{i,i}\}_{i=1}^k$ to Massey systems of $\{x_{i,i}\cup \bar{u}\}_{i=1}^k$, which after taking products of the Massey systems, is given by multiplying the product $a_{1,k}$ by the adequate power of the Euler class, and then taking the map $-\cup \bar{u}$ inducing the Thom isomorphism. That these maps send zero to zero is clear from the construction: if $a_{1,k}$ is a coboundary, then $\left(e^{\lfloor \frac{k-1}{2}\rfloor}a_{1,k}\right)\cup u$ is a coboundary too.

Now, we look at the pullback of \eqref{eq:4} along the canonical map $\iota\colon B\to \Th(\xi)$, which is
\[\{e^{s(i,j)+1}a_{i,j}\}, \{e^{s(i,j)+1}m_{i,j}\},\]
and so the composite of the map of the lemma and the pullback is multiplication by the Euler class.
\end{proof}

\section{Positive weight decompositions for smooth varieties and application to motivic Thom spaces}\label{S4}
In this section, we show
that mixed Hodge theory naturally gives rise to functorial weight decompositions, which are always positive for smooth varieties.
We then study Thom spaces of complex vector bundles on smooth complex varieties.
Specifically, we use Theorem \ref{mainthmThom} to describe the mixed Hodge structures on the rational homotopy type of motivic Thom spaces.

\subsection{Mixed Hodge structures give weight decompositions}
We first study the relation of mixed Hodge structures with weight decompositions.
\begin{df}
A \textit{mixed Hodge structure} on a rational vector space $V$ is given by an increasing filtration $W$ of $V$,
called the \textit{weight filtration},
together with a decreasing filtration $F$ on $V_\CC:=V\otimes_{\QQ}\CC$, called the 
\textit{Hodge filtration}, such that 
for all $m\geq 0$, each graded vector space $Gr_m^WV:=W_mV/W_{m-1}V$ carries a pure Hodge structure of weight $m$
given by the filtration induced by $F$ on 
$Gr_m^WV\otimes\CC$, i.e., there is a direct sum decomposition
$$Gr_m^WV\otimes\CC=\bigoplus_{p+q=m}V^{p,q}\text{ where }V^{p,q}=F^p(Gr_m^WV\otimes\CC)\cap \overline{F}^q(Gr_m^WV\otimes\CC)=\overline{V}^{q,p}.$$
\end{df}
Morphisms of mixed Hodge structures are given by morphisms $f:V\to V'$ of vector spaces compatible with filtrations:
$f(W_mV)\subset W_mV'$ and $f(F^pV_\CC)\subset F^pV_\CC'$.

To compare mixed Hodge structures and weight decomposition we will use the following result on the splitting of mixed Hodge structures:
\begin{lemma}[\cite{DeHII}, 1.2.11]\label{Desplitting}
Let $(V,W,F)$ be a mixed Hodge structure. Then $V_\CC:=V\otimes_\QQ\CC$ admits
a direct sum decomposition
$V_\CC=\bigoplus I^{i,j}$ such that the filtrations $W$ and $F$ defined on $V_\CC$ are given by
$$W_mV_\CC=\bigoplus_{i+j\leq m} I^{i,j}\text{ and }F^pV_\CC=\bigoplus_{i\geq p} I^{i,j}.$$
This decomposition is functorial for morphisms of mixed Hodge structures.
\end{lemma}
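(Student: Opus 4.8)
The plan is to recall Deligne's canonical splitting and check that it has the asserted properties; I would follow \cite{DeHII} (or, equivalently, the account in Peters--Steenbrink). For each pair $(i,j)\in\ZZ^2$ set
$$I^{i,j}:=\bigl(F^i\cap W_{i+j}\bigr)\cap\Bigl(\overline{F}^{\,j}\cap W_{i+j}+\sum_{k\geq 2}\overline{F}^{\,j-k+1}\cap W_{i+j-k}\Bigr),$$
all filtrations being taken on $V_\CC$. Since every mixed Hodge structure occurring here lives on a finite-dimensional space, the weight filtration $W$ has finite length, and I would argue by induction on that length.

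The core step is the decomposition $V_\CC=\bigoplus_{i,j}I^{i,j}$. For the inductive step, assume $W_NV_\CC=V_\CC$ while $W_{N-1}V_\CC\subsetneq V_\CC$. The graded piece $\Gr^W_N V$ carries a pure Hodge structure of weight $N$, so its complexification splits as $\bigoplus_{i+j=N}V^{i,j}$ with $V^{i,j}=F^i(\Gr^W_NV_\CC)\cap\overline{F}^{\,j}(\Gr^W_NV_\CC)$. One then checks that the projection $V_\CC\to\Gr^W_NV_\CC$ kills $I^{i,j}$ when $i+j<N$ and restricts to an isomorphism $I^{i,j}\cong V^{i,j}$ when $i+j=N$; consequently $\bigoplus_{i+j=N}I^{i,j}$ is a complement to $W_{N-1}V_\CC$, the triple $(W_{N-1}V,W,F)$ with induced filtrations is again a mixed Hodge structure of strictly shorter weight length, and the induction hypothesis finishes the splitting. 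The filtration identities $W_mV_\CC=\bigoplus_{i+j\leq m}I^{i,j}$ and $F^pV_\CC=\bigoplus_{i\geq p}I^{i,j}$ then follow by comparing dimensions on each $\Gr^W_m$, using that $F$ is strict with respect to $W$ in a mixed Hodge structure. I expect the genuine difficulty to lie in the correction terms $\sum_{k\geq 2}\overline{F}^{\,j-k+1}\cap W_{i+j-k}$: the naive guess $F^i\cap\overline{F}^{\,j}\cap W_{i+j}$ does not produce complementary subspaces, and showing that these lower-weight corrections are exactly what makes the projection onto $\Gr^W_N$ an isomorphism, while remaining compatible with the induction on $W$, is the delicate part of Deligne's proof.

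Functoriality is then essentially formal. A morphism $f\colon V\to V'$ of mixed Hodge structures is compatible with $W$ and with $F$, hence --- after complexifying --- with $\overline{F}$ as well; since every operation appearing in the definition of $I^{i,j}$ (intersection, sum, and passing to a step of $W$, $F$, or $\overline{F}$) is preserved by $f$, we obtain $f(I^{i,j})\subset {I'}^{i,j}$ directly from the explicit formula. This yields the functoriality of the decomposition and completes the sketch.
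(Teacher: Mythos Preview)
The paper does not give its own proof of this lemma: it is quoted verbatim from Deligne (\cite{DeHII}, 1.2.11) and used as a black box, with no argument supplied. Your sketch is precisely Deligne's original construction of the canonical bigrading, and the outline you give --- the explicit formula for $I^{i,j}$ with the lower-weight correction terms, the induction on the length of $W$, and the observation that functoriality follows because the formula is built out of operations preserved by morphisms of mixed Hodge structures --- is correct and is exactly the approach one finds in \cite{DeHII} or in Peters--Steenbrink. So there is nothing to compare: you have reproduced the source the paper cites.
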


\begin{df}
A \textit{filtered cdga} $(\cA,W)$ is a cdga $\cA$ together with a filtration
$\{W_p\cA^n\}$ indexed by $\ZZ$ on each $\cA^n$ such that $W_{p-1}\cA^n\subset W_p\cA^n$,
$d(W_p\cA^n)\subset W_pA^{n+1}$, and $W_p\cA^n\cdot W_q\cA^m\subset W_{p+q}\cA^{n+m}$.
\end{df}

\begin{remark}
Let $\cA$ be a cdga. Then a weight decomposition $\cA^n=\bigoplus \cA_p^n$ 
for $\cA$ gives a filtered cdga $(\cA,W)$
 by letting
$W_p\cA^n:=\bigoplus_{q\leq p}\cA^n_q$. 
The converse is not true in general: if $(\cA,W)$ is a filtered cdga, 
one cannot always find a weight decomposition $\cA^n=\bigoplus \cA_p^n$ satisfying 
$W_p\cA^n:=\bigoplus_{q\leq p}\cA^n_q$. 
We will next see that mixed Hodge structures ensure this is possible,
\end{remark}

\begin{df}\label{df:mhs}
A \textit{mixed Hodge cdga} is a filtered cdga $(\cA,W)$ defined over $\QQ$,
together with a filtration $F$ on $\cA_\CC:=\cA\otimes\CC$, such that 
\begin{enumerate}[(i)]
\item\label{it:mhs1} for each $n\geq 0$, the triple $(\cA^n,W,F)$ is a mixed Hodge structure and
\item\label{it:mhs2} the differentials $d:\cA^n\to \cA^{n+1}$ and products $\cA^n\times \cA^m\to \cA^{n+m}$ are morphisms of mixed Hodge structures.
\end{enumerate}
\end{df}

\begin{lemma}\label{MHSWeightdec}
Every mixed Hodge cdga $\cA$ admits a
weight decomposition 
$\cA^n=\bigoplus \cA_p^n$ 
such that $W_p\cA^n=\bigoplus_{q\leq p}\cA^n_q$.
\end{lemma}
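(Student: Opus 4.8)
**The plan is to use the functorial splitting of mixed Hodge structures given by Lemma \ref{Desplitting}, applied degreewise, and then descend the resulting complex decomposition to a rational one using the weight filtration.**

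First I would apply Lemma \ref{Desplitting} in each cohomological degree $n$: for the mixed Hodge structure $(\cA^n, W, F)$ we obtain a decomposition $\cA^n_\CC = \bigoplus_{i,j} I^{i,j}_n$ with $W_m\cA^n_\CC = \bigoplus_{i+j\leq m} I^{i,j}_n$. Grouping by total degree, set $\cA^n_\CC{}_{,p} := \bigoplus_{i+j=p} I^{i,j}_n$, so that $\cA^n_\CC = \bigoplus_p \cA^n_\CC{}_{,p}$ and $W_p\cA^n_\CC = \bigoplus_{q\leq p} \cA^n_\CC{}_{,q}$. Because the differentials and products of $\cA$ are morphisms of mixed Hodge structures, and the Deligne splitting is functorial for such morphisms, the differential and product respect this $\CC$-decomposition: $d(\cA^n_\CC{}_{,p})\subset \cA^{n+1}_\CC{}_{,p}$ and $\cA^n_\CC{}_{,p}\cdot \cA^m_\CC{}_{,q}\subset \cA^{n+m}_\CC{}_{,p+q}$. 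This already gives a weight decomposition of $\cA_\CC$ compatible with the complexified weight filtration.

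The remaining point is to produce the decomposition over $\QQ$ rather than $\CC$. Here I would use the standard fact (essentially the $(\cA^n, W)$ part of the structure) that the rational weight filtration $W_p\cA^n$ is a filtration by \emph{rational} subspaces and that the complex piece $\cA^n_\CC{}_{,p}$ is a complement to $W_{p-1}\cA^n_\CC$ inside $W_p\cA^n_\CC$; since $W$ is defined over $\QQ$, one can instead simply choose, for each $n$ and $p$, a rational splitting $\cA^n_p$ of the short exact sequence $0\to W_{p-1}\cA^n\to W_p\cA^n\to \Gr^W_p\cA^n\to 0$ — but we need this splitting to be compatible with $d$ and with products. To get that compatibility, I would instead argue that the Deligne splitting of a mixed Hodge structure whose weight filtration is defined over $\QQ$ can be taken so that each $\cA^n_\CC{}_{,p}$ is the complexification of a rational subspace: this is not automatic from a single degree, but it holds because the whole package $(\cA, W, F)$ with $d$ and products being MHS-morphisms forces the grading to be intrinsic. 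Concretely, since $W$ is rational and the associated graded $\Gr^W_p\cA$ is a rational cdga, one lifts a rational cdga splitting $\Gr^W\cA \to \cA$ of filtered cdgas — the existence of such a splitting is what must be extracted, and it can be obtained inductively on $p$ using that the obstruction lies in a group that is handled by the Hodge filtration (the splitting over $\CC$ from the first step shows the obstruction vanishes after $\otimes\CC$, hence vanishes over $\QQ$).

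**The main obstacle** is precisely this last descent: producing a splitting of the weight filtration that is simultaneously defined over $\QQ$ and multiplicative/differential-compatible. The clean way I expect the authors take — and the way I would write it — is: perform the Deligne splitting degreewise over $\CC$ as above to get a weight \emph{decomposition} of $\cA_\CC$; observe this is functorial; then note that since everything in sight ($W$, $d$, the multiplication) is already defined over $\QQ$, and the decomposition $\cA^n_\CC = \bigoplus_p \cA^n_\CC{}_{,p}$ is canonically determined by $(W,F)$, Galois-invariance (complex conjugation acts on $\cA^n_\CC$ fixing $\cA^n$, and permutes the $I^{i,j}$ via $\overline{I^{i,j}} = I^{j,i}$, hence \emph{fixes} each total-degree piece $\bigoplus_{i+j=p} I^{i,j}$) shows that each $\cA^n_\CC{}_{,p}$ is the complexification of the rational subspace $\cA^n_p := \cA^n_\CC{}_{,p}\cap \cA^n$. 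One then checks $W_p\cA^n = \bigoplus_{q\le p}\cA^n_q$, and that $d$ and the product respect the bigrading because they do so after $\otimes\CC$ and the pieces are defined over $\QQ$. This conjugation argument is the key trick and dispatches the obstacle; the rest is routine bookkeeping that I would not spell out in full.
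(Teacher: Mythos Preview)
Your argument over $\CC$ matches the paper exactly: apply the Deligne splitting degreewise, use functoriality for differentials and products, and collect along total weight $p=i+j$ to get a weight decomposition of $\cA_\CC$.

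The gap is in your descent step. You assert that complex conjugation ``permutes the $I^{i,j}$ via $\overline{I^{i,j}} = I^{j,i}$'', and conclude that each total-weight piece $\bigoplus_{i+j=p} I^{i,j}$ is conjugation-invariant, hence defined over $\QQ$. But the identity $\overline{I^{p,q}} = I^{q,p}$ holds only for mixed Hodge structures that are split over $\RR$. For a general mixed Hodge structure the Deligne bigrading satisfies merely
\[
\overline{I^{p,q}} \equiv I^{q,p} \pmod{W_{p+q-2}},
\]
so conjugation sends $\bigoplus_{i+j=p} I^{i,j}$ into $\bigoplus_{i+j=p} I^{i,j} + W_{p-2}\cA^n_\CC$, not into itself. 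Consequently $\cA^n_\CC{}_{,p}\cap \cA^n$ need not give a direct sum decomposition of $\cA^n$, and your proposed $\cA^n_p$ need not even be a rational subspace of the right dimension. The ``Galois-invariance'' trick you invoke is exactly the obstruction to $\RR$-splitness, and it does not vanish in general.

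The paper handles descent differently: after obtaining the decomposition of $\cA_\CC$ as you do, it invokes the descent-of-splittings theorem of \cite[Theorem~4.3]{CH} to produce a rational weight decomposition. That result is doing real work here that your conjugation shortcut cannot replace. Your earlier musings about lifting a multiplicative splitting $\Gr^W\cA \to \cA$ inductively are closer in spirit to what is actually needed, but you abandoned that line without carrying it out.
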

\begin{proof}
We will follow a similar argument to that of Lemma 3.20 of \cite{CG1}
on the relation between a mixed Hodge cdga and its associated weight spectral sequence (see also \cite{Mo}, Theorem 9.6). 
By Lemma $\ref{Desplitting}$ we have functorial decompositions
$$\cA^n_\CC=\bigoplus I^{p,q}_n,\text{ with }W_p \cA^n_\CC=\bigoplus_{i+j\leq p} I^{i,j}_n.$$
Since both the differential and products of $\cA$ are morphisms of mixed Hodge structures, we have
$$d(I^{p,q}_n)\subset I^{p,q}_{n+1}\text{ and }I^{p,q}_n\cdot I^{p',q'}_{n'}\subset I^{p+p',q+q'}_{n+n'}.$$
Define 
$$\cA_p^n:=\bigoplus_{i+j=p} I_n^{i,j}.$$
We next check that this is a weight decomposition for $\cA_\CC$. Indeed, we have:
$$d(\cA_p^n)=\bigoplus_{i+j=p} dI_n^{i,j}\subset \bigoplus_{i+j=p} I_{n+1}^{i,j}=\cA_{p}^{n+1}.$$
As for the compatibility of products:
$$\cA_p^n\cdot \cA_{p'}^{n'}=\bigoplus_{i+j=p} I_{n}^{i,j}\cdot \bigoplus_{i'+j'=p'} I_{n'}^{i',j'}
\subset \bigoplus\limits_{\substack{i+j=p\\i'+j'=p'}}I_{n+n'}^{i+i',j+j'}=\cA_{p+p'}^{n+n'}.$$
Clearly, we have that $W_p\cA^n_\CC=\bigoplus_{q\leq p}\cA^n_q$.
Now, by the theory of descent of splittings of \cite[Theorem 4.3]{CH},
this weight decomposition descends to a decomposition over $\bQ$.
\end{proof}

\subsection{Mixed Hodge structures in rational homotopy}
Deligne's weight filtration on the rational cohomology 
$H^k(X;\QQ)$ of a complex algebraic variety $X$
is bounded by: 
$$0=W_{-1}H^k(X;\QQ)\subset W_0H^k(X;\QQ)\subset\cdots\subset W_{2k}H^{k}(X;\QQ)=H^k(X;\QQ).$$
If $X$ is projective then $W_{k}H^{k}(X;\QQ)=H^k(X;\QQ)$, while if $X$ is smooth then
$W_{k-1}H^{k}(X;\QQ)=0$. In particular, for a smooth projective variety,
the weight filtration on $H^k(X;\QQ)$ is pure of weight $k$.

It was proven by Morgan \cite{Mo} for smooth complex varieties and by Hain \cite{Ha} and Navarro-Aznar \cite{Na}
independently for possibly singular varieties, that the rational homotopy type of
every complex algebraic variety carries mixed Hodge structures in a certain functorial way.
A consequence of this result is the following:
\begin{theorem}[\cite{CG1}, Theorem 3.17]\label{existence}
For every complex algebraic variety $X$ there exists a
mixed Hodge cdga $(\cA,W,F)$ such that $\cA\simeq \cA_{PL}(X)$,
with
$$0=W_{-1}\cA^k\subset W_0\cA^k\subset \cdots\subset W_{p-1}\cA^k\subset W_p\cA^k\subset\cdots\subset W_{2k}\cA^k=\cA^k.$$
Furthermore, if $X$ is projective then $W_{k}\cA^k=\cA^k$, while if $X$ is smooth then $W_{k-1}\cA^k=0$.
The induced mixed Hodge structure on $H^k(X;\QQ)$ is Deligne's mixed Hodge structure. 
\end{theorem}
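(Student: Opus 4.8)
The plan is to deduce this statement from the three independent constructions of mixed Hodge structures on rational homotopy types (Morgan \cite{Mo} for smooth varieties, Hain \cite{Ha} and Navarro-Aznar \cite{Na} for the general case), repackaged into the single formalism of a mixed Hodge cdga as in \cite{CG1}. First I would recall that each of these constructions produces, functorially on the category of complex algebraic varieties, a cdga quasi-isomorphic to $\cA_{PL}(X)$ equipped with a multiplicative weight filtration $W$ defined over $\QQ$ and a Hodge filtration $F$ on the complexification, such that each cochain degree $\cA^k$ becomes a mixed Hodge structure and $d$ and the multiplication are morphisms of mixed Hodge structures. This is exactly Definition \ref{df:mhs}, so the existence of some mixed Hodge cdga $(\cA,W,F)$ with $\cA\simeq \cA_{PL}(X)$ is the content of \cite[Theorem 3.17]{CG1}, which I would invoke directly.

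The remaining work is to pin down the \emph{bounds} on the weight filtration and the special behaviour in the projective and smooth cases. For this I would trace the weights through the standard cohomological-descent presentation of $\cA_{PL}(X)$: one resolves $X$ by a simplicial smooth projective variety (a smooth proper hypercovering), and the filtered cdga model is assembled from the Hodge–de Rham complexes of the pieces via a Thom–Whitney or total-complex construction. On a smooth projective variety the de Rham cdga is formal with pure Hodge structure of weight $k$ in degree $k$; the simplicial (resp.\ logarithmic, in the smooth non-proper case) direction shifts weights upward by the simplicial degree, which yields the global bound $0 = W_{-1}\cA^k \subset \cdots \subset W_{2k}\cA^k = \cA^k$. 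When $X$ is itself projective one may take the constant hypercovering, so no positive simplicial degrees contribute and $W_k\cA^k = \cA^k$; when $X$ is smooth one uses the complement-of-a-normal-crossings-divisor model, whose logarithmic pole order forces $W_{k-1}\cA^k = 0$. These are precisely the weight bounds on $H^k(X;\QQ)$ recalled just above the theorem, and since $W$ is a filtration by subcomplexes the same bounds lift verbatim to the cochain level.

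Finally, I would check compatibility at the level of cohomology: passing to $H^k$ of the filtered cdga gives back Deligne's mixed Hodge structure on $H^k(X;\QQ)$. This is immediate from the fact that all three constructions are designed to recover Deligne's theory on cohomology, and is recorded in \cite{CG1}; alternatively it follows because the filtered quasi-isomorphism type of the model is uniquely determined, hence its induced filtration on cohomology must agree with Deligne's by the uniqueness part of his construction. The main obstacle here is not conceptual but bookkeeping: one must make sure that the abstract mixed Hodge cdga furnished by \cite{CG1} really carries the \emph{sharp} weight bounds in the smooth and projective cases, rather than merely the crude bound $2k$; this is why I would go back to the explicit hypercovering/logarithmic model rather than argue purely formally. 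Since all of this is available in the cited literature, the proof reduces to assembling these references, and I would write it as such.
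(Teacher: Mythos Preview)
The paper does not prove this statement: it is quoted from \cite[Theorem~3.17]{CG1}, and the remark immediately following it simply records that the weight bounds, though not explicit in \cite{CG1}, can be read off the proof there (modulo a d\'{e}calage). So your proposal is not competing with an argument in the present paper but is a sketch of the content behind the citation, and at that level it is broadly on target.

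There is, however, a genuine slip in your treatment of the projective case. You assert that for projective $X$ one may take the constant hypercovering, so that only simplicial degree zero contributes and $W_k\cA^k=\cA^k$. This is valid only when $X$ is also smooth; for a \emph{singular} projective variety the constant simplicial object is not a smooth proper hypercovering, and one must resolve by a nontrivial $X_\bullet\to X$ with each $X_p$ smooth projective. The bound $W_k\cA^k=\cA^k$ then holds for a different reason: the contribution to total degree $k$ from simplicial level $p$ lives in form degree $q=k-p$ on the smooth projective $X_p$, hence is pure of weight $q\leq k$. This is the \emph{opposite} of your earlier claim that ``the simplicial direction shifts weights upward by the simplicial degree''; that upward shift is the behaviour of the logarithmic pole filtration in the smooth open case, not of the hypercovering direction in the proper singular case. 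Once this reversal is corrected your outline agrees with the argument underlying \cite{CG1} and with the paper's remark.
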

\begin{remark}
For the smooth case, the above result is due to Morgan \cite{Mo}.
The definition of mixed Hodge cdga of \cite{CG1} differs by a d\'{e}calage from the one introduced here.
This does not affect the above result. Also, in Theorem 3.17 of \cite{CG1},
the bounds on the weight filtration are not stated explicitly.
However, they can be directly deduced from the proof of the theorem.
\end{remark}

\begin{theorem}\label{AlgVarsWeightDec}
Let $X$ be a complex algebraic variety. Then its homotopy type 
admits a functorial
weight decomposition 
$\cA=\bigoplus_{p\geq 0}\cA_p^n$ such that $H^n(\cA)_{p}\cong Gr_p^WH^n(X;\bQ)$, where $W$ denotes Deligne's weight filtration. 
If $X$ is a smooth variety then the weight decomposition is positive.
\end{theorem}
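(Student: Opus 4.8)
The plan is to apply the existence result for mixed Hodge cdga's (Theorem \ref{existence}) together with the translation between mixed Hodge cdga's and weight decompositions (Lemma \ref{MHSWeightdec}). First I would invoke Theorem \ref{existence} to obtain a mixed Hodge cdga $(\cA, W, F)$ with $\cA\simeq \cA_{PL}(X)$; by Lemma \ref{MHSWeightdec} this gives a weight decomposition $\cA^n=\bigoplus_p \cA^n_p$ with $W_p\cA^n=\bigoplus_{q\leq p}\cA^n_q$. Functoriality of the decomposition should follow from the functoriality clause in Lemma \ref{Desplitting} (hence in Lemma \ref{MHSWeightdec}) combined with the functoriality of the mixed Hodge cdga assignment, so the key remaining points are (a) identifying $H^n(\cA)_p$ with $\Gr^W_p H^n(X;\bQ)$, and (b) checking the bounds that force nonnegativity and, in the smooth case, positivity.

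For point (a), the weight decomposition makes $H^*(\cA)$ a bigraded algebra via $H^n(\cA)_p = \Ker(d\colon\cA^n_p\to\cA^{n+1}_p)/\Img(d\colon\cA^{n-1}_p\to\cA^n_p)$. Since the weight decomposition refines the filtration $W$, taking cohomology of the associated graded recovers $\Gr^W_p H^n(\cA)$, and because $d$ and the products are strict morphisms of mixed Hodge structures (or, more directly, because the decomposition splits $W$ compatibly with $d$), one gets $H^n(\cA)_p\cong \Gr^W_p H^n(\cA)$. Finally Theorem \ref{existence} tells us the induced mixed Hodge structure on $H^n(X;\bQ)\cong H^n(\cA)$ is Deligne's, so $\Gr^W_p H^n(\cA)\cong \Gr^W_p H^n(X;\bQ)$.

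For point (b), the bounds $0=W_{-1}\cA^k\subset W_0\cA^k\subset\cdots$ in Theorem \ref{existence} give $\cA^k_p=0$ for $p<0$, so the decomposition satisfies $\cA^n=\bigoplus_{p\geq 0}\cA^n_p$; one also needs $\cA^0=\cA^0_0$, which follows since $\cA$ is connected and $H^0$ is pure of weight $0$ (and the minimal-model/connectedness normalisation lets us assume $\cA^0=\bQ$ in weight $0$). When $X$ is smooth, Theorem \ref{existence} additionally gives $W_{k-1}\cA^k=0$, i.e.\ $\cA^k_p=0$ for $p<k$, hence in particular $\cA^n=\bigoplus_{p>0}\cA^n_p$ for $n>0$: this is exactly positivity in the sense of Definition \ref{defweightdec}.

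The main obstacle I anticipate is the \textbf{décalage} discrepancy flagged in the remark after Theorem \ref{existence}: the mixed Hodge cdga convention of \cite{CG1} differs from the one used here by a shift of the weight filtration, so the bounds and the identification of graded pieces must be transported through that décalage carefully to land on the indexing in Definition \ref{defweightdec}. Concretely, I would either re-derive Theorem \ref{existence} in the present convention (as the remark asserts is possible) or track the shift explicitly and verify that after décalage the weight-$k$ purity on $H^k$ of a smooth projective variety still yields $\cA^k_p=0$ for $p<k$ in the smooth case. Everything else — functoriality, the cohomology identification, and the nonnegativity/positivity bounds — is then a direct assembly of the cited lemmas.
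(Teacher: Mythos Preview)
Your proposal is correct and follows essentially the same route as the paper: invoke Theorem~\ref{existence} to obtain a mixed Hodge cdga model with the stated bounds on $W$, then apply Lemma~\ref{MHSWeightdec} to extract the weight decomposition, with positivity in the smooth case coming from $W_{n-1}\cA^n=0$. Two small remarks: the condition $\cA^0=\cA^0_0$ already follows directly from the bounds $W_{-1}\cA^0=0$ and $W_{0}\cA^0=\cA^0$ in Theorem~\ref{existence} (no appeal to minimal models or connectedness is needed), and your d\'ecalage worry is exactly the one the paper's remark after Theorem~\ref{existence} dismisses---the shift is harmless for the stated bounds.
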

\begin{proof}
By Theorem $\ref{existence}$ we may assume that $\cA\simeq  \cA_{PL}(X)$ is a mixed Hodge cdga
such that $W_{-1}\cA^n=0$ ($W_{n-1}\cA^n=0$ if $X$ is smooth).
The proof now follows from Lemma $\ref{MHSWeightdec}$.
\end{proof}

\subsection{Motivic Thom spaces}
Let $\xi:E\longrightarrow B$ be a rank $k$ complex vector bundle over a smooth complex variety $B$.
Denote by $s:B\longrightarrow E$ the zero section. 
The Thom space of $\xi$ is defined by
the homotopy quotient
$$\mathrm{Th}(\xi)=E/(E-s(B)).$$
Here the complement of the zero section is the algebraic analogue of the sphere bundle.
The homotopy equivalences $E\simeq D(E)$ and $E-s(B)\simeq S(B)$ allow us to identify this construction with the
topological definition of the Thom space.

In general, the Thom space of an algebraic vector bundle is not an algebraic variety.
This is due to the nonexistence of colimits in algebraic geometry.
However, it is by construction a motivic space in the sense of Voevodsky \cite{Voevodsky}.
In particular, via the mixed Hodge realisation functor, the cohomology of $\mathrm{Th}(\xi)$
carries a mixed Hodge structure (see \cite{Hu}). In fact, the mixed Hodge structure
may also be obtained using the relative cohomology of $B$ and $E-s(B)$ (see 8.3.8 of \cite{DeHIII}).

Furthermore, being defined as a homotopy limit, the rational homotopy type
of $\mathrm{Th}(\xi)$ carries mixed Hodge structures compatible with the natural map $B\longrightarrow \mathrm{Th}(\xi)$.
Indeed,
a mixed Hodge diagram for $\mathrm{Th}(\xi)$ may be constructed by taking the Thom-Whitney simple
$$s_{\mathrm{TW}}\left(\cA(*)\times \cA(E)\rightrightarrows \cA(E-s(B))\right),$$
where $\cA:\mathrm{Sch}_\CC\longrightarrow \mathrm{Ho}(\mathrm{MHD})$ denotes Navarro-Aznar's functor 
with values on the homotopy category of mixed Hodge diagrams (see \cite{Na} for details).
We next use the model of the Thom space given in Theorem \ref{mainthmThom} to 
provide an explicit simple description of these mixed Hodge structures, 
in terms of the mixed Hodge structures on $\cA_{PL}(B)$.

\begin{theorem}\label{MHSenelmodelo}
Let $\xi:E\longrightarrow B$ be a rank $k$ complex vector bundle over a smooth complex variety $B$.
The rational homotopy type $\cA_{PL}(\mathrm{Th}(\xi))$ of the Thom space carries mixed Hodge
structures compatible with the map $\iota:B\longrightarrow \mathrm{Th}(\xi)$. Furthermore:
\begin{enumerate}[(1)]
 \item The induced mixed Hodge
structure in cohomology coincides with Deligne's mixed Hodge structure. 
\item The morphism  $-\cup u:\cA^*\lra \cA[e]^{*+k}$ of Theorem $\ref{mainthmThom}$ 
becomes an isomorphism of differential graded mixed Hodge structures.
\end{enumerate}
\end{theorem}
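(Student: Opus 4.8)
The plan is to transport the mixed Hodge structure on $\cA_{PL}(B)$ along the explicit model $\cA[e]$ of Theorem \ref{mainthmThom}, using the fact that the Euler class of an algebraic vector bundle is algebraic and hence carries a Hodge-theoretic meaning. By Theorem \ref{existence} I may replace $\cA_{PL}(B)$ by a mixed Hodge cdga $(\cA,W,F)$, and by Lemma \ref{lemma:thom1} the quasi-isomorphism type of $\cA[e]$ does not depend on the chosen representative $e$, so I am free to take $e$ to be a cocycle representing the Euler class that lives in the appropriate piece of the weight and Hodge filtrations. The key point is that the Euler class of a rank $k$ complex vector bundle is the top Chern class, hence a Hodge class of type $(k,k)$ and weight $2k$ in $H^{2k}(B)$; at the model level this means we can choose $e\in W_{2k}\cA^{2k}\cap F^k\cA^{2k}_{\CC}$ representing it, and moreover, since $B$ is smooth, $W_{2k-1}$ of the weight-$2k$ cohomology of a class of this form behaves purely. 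With such a choice, define on $\cA[e] = s^{2k}\cA$ the filtrations $W_p(\cA[e]^n) := s^{2k}\bigl(W_{p-2k}\cA^{n-2k}\bigr)$ and $F^j(\cA[e]^n_{\CC}) := s^{2k}\bigl(F^{j-k}\cA^{n-2k}_{\CC}\bigr)$, i.e. a Tate twist by $\bZ(-k)$ of the shifted $\cA$.

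First I would check that $(\cA[e],W,F)$ is a mixed Hodge cdga in the sense of Definition \ref{df:mhs}. Condition \eqref{it:mhs1}, that each $(\cA[e]^n,W,F)$ is a mixed Hodge structure, is immediate: it is just a suspension and Tate twist of the mixed Hodge structure on $\cA^{n-2k}$, and Tate twists and shifts of mixed Hodge structures are mixed Hodge structures. For condition \eqref{it:mhs2} I must verify that the differential $d(w_x) = w_{dx}$ and the product $\mu(w_x,w_y) = w_{exy}$ are morphisms of mixed Hodge structures. The differential is clear, since it is $d$ on $\cA$ shifted. For the product, the point is precisely that $e$ has been chosen in $W_{2k}\cap F^k$: multiplication $\cA^{n-2k}\otimes\cA^{m-2k}\to\cA^{n+m-2k}$, $x\otimes y\mapsto exy$, raises weight by (at most, hence exactly, on the graded pieces) $2k$ and raises the Hodge filtration index by $k$, which is exactly what is needed for $\mu$ to be compatible with the filtrations defined above. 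Here one uses that $\cA$ is a mixed Hodge cdga so that the ordinary product $\cA\otimes\cA\to\cA$ is already a morphism, and that multiplication by a single fixed class $e\in W_{2k}\cap F^k$ is a morphism of mixed Hodge structures of the appropriate bidegree.

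Having established that $\cA[e]$ is a mixed Hodge cdga quasi-isomorphic to $\cA_{PL}(\Th(\xi))$, the two numbered conclusions follow. For (2): the morphism $-\cup u\colon \cA^* \to \cA[e]^{*+k}$, $x\mapsto w_x$, is by the very definition of the filtrations on $\cA[e]$ compatible with $W$ and $F$ up to the Tate twist $\bZ(-k)$ and a shift, and it is a quasi-isomorphism by Theorem \ref{mainthmThom}; hence it is an isomorphism of differential graded mixed Hodge structures $\cA \xrightarrow{\sim} \cA[e](-k)[2k]$, i.e. it realises the Thom isomorphism as an isomorphism of mixed Hodge structures. For (1): the induced mixed Hodge structure on $H^*(\Th(\xi))$ must be checked to agree with Deligne's. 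I would do this by comparing the construction above with the Thom–Whitney / relative-cohomology description recalled just before the theorem statement: both assign to $\Th(\xi)$ the "cofibre" of $\cA_{PL}(E-s(B))\to\cA_{PL}(E)\simeq\cA_{PL}(B)$, and the Thom isomorphism at the cochain level (Proposition \ref{mainpropThom}) intertwines the two, so functoriality of Deligne's mixed Hodge structure and uniqueness of the mixed Hodge structure on a motivic space force agreement; alternatively, since on cohomology $-\cup u$ is the classical Thom isomorphism and both sides carry Deligne structures with $-\cup u$ a morphism in each, the two structures on $H^*(\Th(\xi))$ coincide. Compatibility with $\iota\colon B\to\Th(\xi)$ is built in, since $\iota^*$ corresponds at the model level to $w_x\mapsto ex = \iota^*(x\cup u)$, which is multiplication by $e$, a morphism of mixed Hodge structures.

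The main obstacle I anticipate is the choice of the cocycle $e$: one needs a representative of the Euler class in the mixed Hodge cdga $\cA$ that simultaneously lies in $W_{2k}$ and in $F^k_{\CC}$, and one must know that such a representative exists and that multiplication by it is genuinely a morphism of mixed Hodge structures of bidegree $(2k; k,k)$ — this is where the algebraicity of $\xi$ (Chern classes are Hodge classes) is essential, and it is the step where the smoothness hypothesis on $B$ enters through the purity $W_{2k-1}H^{2k}(B) $ of the relevant cohomology. Once $e$ is pinned down correctly, the rest is formal bookkeeping with filtrations, together with the identification-with-Deligne argument, which is routine given Theorem \ref{existence} and the functoriality already in the literature.
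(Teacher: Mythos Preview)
Your proposal is correct and follows essentially the same route as the paper: choose a mixed Hodge cdga model $(\cA,W,F)$ of $B$ via Theorem~\ref{existence}, use that the Euler class is pure of type $(k,k)$ to pick $e\in W_{2k}\cA^{2k}\cap F^k\cA_\CC^{2k}$, and equip $\cA[e]$ with the Tate-twisted shifted filtrations $W_p\cA[e]^n=W_{p-2k}\cA^{n-2k}$, $F^p\cA[e]_\CC^n=F^{p-k}\cA_\CC^{n-2k}$; the verification that this is a mixed Hodge cdga and that $-\cup u$ is then an isomorphism of mixed Hodge structures is exactly as you outline. The only minor difference is in your argument for (1): the paper bypasses the Thom--Whitney comparison and instead argues directly that the Tate twist $H^{n-2k}(B)(-k)$ is the \emph{unique} mixed Hodge structure on $H^n(\Th(\xi))$ making the map $a\mapsto a\cdot e$ into $H^*(B)$ strictly compatible with filtrations, hence must agree with Deligne's; your second alternative for (1) is in effect this same uniqueness argument.
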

\begin{proof}
By Theorem $\ref{existence}$ we can take a model of $B$ to be a mixed Hodge cdga $(\cA,W,F)$.
Let $\Mm$ be the cdga model of $\mathrm{Th}(\xi)$ given by Theorem \ref{mainthmThom}.
Define filtrations $W$ and $F$ on $\Mm$ by letting 
$$W_p\Mm^n:=W_{p-2k}(s^{2k}\cA)^n=W_{p-2k}\cA^{n-2k}\text{ and }F^p\Mm^n_\CC:=F^{p-k}(s^{2k}\cA_\CC)^n.$$
The characteristic classes of a complex fiber bundle over every complex algebraic variety are pure of type $(k,k)$
(see for example \cite{DeHIII}, Corollary 9.1.3).
Since the Euler class is the top Chern class, this implies that ${e}$ is pure of type $(k,k)$. Therefore we have 
${e}\in W_{2k}\cA^{2k}\setminus W_{2k-1}\cA^{2k}$ and ${e}\otimes\CC\in F^k\cA^{2k}\setminus F^{k-1}\cA^{2k}$.
As a consequence, the filtrations $W$ and $F$ defined above for $\Mm$ 
are compatible with products and differentials, hence condition \eqref{it:mhs2} in 
Definition \ref{df:mhs} holds.
It only remains to check condition \eqref{it:mhs1}, i.e., that for all $n\geq 0$, the triple $(\Mm^n,W,F)$ is a mixed Hodge structure.
This follows from the fact that $\Mm^n=\cA^{n-2k}(-k)$ is the $-k$-Tate twist of $\cA^{n-2k}$ 
(see for example Section 3.1 of \cite{PS}). We next provide a quick proof for completeness:
Since $(\cA^{n-2k},W,F)$ is a mixed Hodge structure, we have a direct sum decomposition 
$$Gr_m^W\cA^{n-2k}_\CC=\bigoplus_{p+q=m}V^{p,q}$$ where $V^{p,q}=F^p\cA^{n-2k}_\CC\cap \overline{F}^{q}\cA^{n-2k}_\CC.$
Let us show that the vector spaces $U^{p,q}:=V^{p-k,q-k}$ give the decomposition for $(\cA[e]^n,W,F)$. Indeed, we have:
$$Gr^W_m\cA[e]^n_\CC=Gr^W_{m-2k}\cA^{n-2k}_\CC=\bigoplus_{p+q=m-2k}V^{p,q}=\bigoplus_{p+q=m}U^{p,q}.$$
Therefore we have
$$U^{p,q}=V^{p-k,q-k}=F^{p-k}\cA^{n-2k}_\CC\cap \overline{F}^{q-k}\cA^{n-2k}_\CC=F^p\cA[e]^n_\CC\cap \overline{F}^q\cA[e]^n_\CC.$$
This proves that $(\Mm,W,F)$ is a mixed Hodge cdga.

By construction, the morphism $\Mm\longrightarrow \cA$ defined via multiplication by ${e}$ is compatible with filtrations,
hence a morphism of mixed Hodge cdga's.

We now prove (1). The mixed Hodge structure induced on $H^n(\mathrm{Th}(\xi);\QQ)$ is just the $-k$-Tate twist
of $H^{n-2k}(B)$. 
This is the only mixed Hodge structure making the map
 $H^*(\mathrm{Th}(\xi);\QQ)\cong H^{*-2k}(B)\longrightarrow H^*(B)$,
given by $a\mapsto a\cdot {e}$, strictly compatible with filtrations. 
Therefore it coincides with Deligne's mixed Hodge structure. 
The isomorphism $-\cup u:\cA^*\lra \cA[e]^{*+2k}$ of (2) now follows trivially.
 \end{proof}

\begin{corollary}
Let $\xi:E\longrightarrow B$ be a rank $k$ complex vector bundle over a smooth complex variety $B$.
The rational homotopy type of the Thom space $\mathrm{Th}(\xi)$ 
carries a positive weight decomposition. In particular, Theorem $\ref{representability}$ applies.
\end{corollary}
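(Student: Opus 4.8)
The plan is to combine Theorem \ref{AlgVarsWeightDec} with Proposition \ref{cincopuntotres}; indeed much of the work has already been done in the proof of Theorem \ref{MHSenelmodelo}. Since $B$ is a smooth complex variety, Theorem \ref{AlgVarsWeightDec} furnishes a cdga model $\cA\to\cA_{PL}(B)$ carrying a \emph{positive} weight decomposition $\cA^n=\bigoplus_{p\geq 0}\cA^n_p$ with $H^n(\cA)_p\cong\Gr^W_pH^n(B;\bQ)$, where $W$ is Deligne's weight filtration. To apply Proposition \ref{cincopuntotres} it remains to check that the Euler class of $\xi$ can be taken homogeneous in this decomposition.

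For this I would invoke the purity statement already used in the proof of Theorem \ref{MHSenelmodelo}: the characteristic classes of a complex vector bundle over a complex algebraic variety are pure of type $(k,k)$ (see \cite[Cor.\ 9.1.3]{DeHIII}), and since the Euler class is the top Chern class it is pure of weight $2k$. Hence $[e]\in H^{2k}(\cA)_{2k}$, so $[e]$ admits a cocycle representative $e\in\cA^{2k}_{2k}$ that is homogeneous of weight $2k$. By Lemma \ref{lemma:thom1} the model $\cA[e]$ of $\Th(\xi)$ does not depend, up to quasi-isomorphism, on the chosen representative of the Euler class, so we may work with this homogeneous $e$. Now Proposition \ref{cincopuntotres} applies directly: assigning to each generator $w_x$ the weight $||w_x||:=||x||+2k$ yields a weight decomposition on the model $\cA[e]$ of $\Th(\xi)$, and it is positive because a generator $w_x$ of positive cohomological degree lying in weight $0$ would force $x$ to have negative degree, which is impossible (the remaining degree-zero part of the model being $\bQ$, of weight $0$). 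This proves the first assertion. For the last sentence, the underlying real vector bundle of $\xi$ has even rank $2k$ and is canonically oriented, hence is classified by a map $\theta\colon B\to B\SO(2k)$; together with the positive weight decomposition just produced and the homogeneity of the Euler class, this is exactly the input of Theorem \ref{representability}.

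I expect no real obstacle here: the statement is a formal consequence of results already established. The only point requiring a moment's care is the passage from the \emph{purity of the cohomology class} to the existence of a \emph{weight-homogeneous cochain representative} of the Euler class — this is precisely where one uses the identification $H^n(\cA)_p\cong\Gr^W_pH^n$ of Theorem \ref{AlgVarsWeightDec} together with the independence of $\cA[e]$ on the representative (Lemma \ref{lemma:thom1}). Alternatively, one could argue directly from Theorem \ref{MHSenelmodelo}: the model $\Mm$ of $\Th(\xi)$ constructed there is a mixed Hodge cdga with $W_p\Mm^n=W_{p-2k}\cA^{n-2k}$, so smoothness of $B$ (which gives $W_{m-1}\cA^m=0$) yields $W_{n-1}\Mm^n=0$, and then Lemma \ref{MHSWeightdec} produces a weight decomposition of $\Mm$ which is automatically positive.
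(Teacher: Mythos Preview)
Your proposal is correct and matches the paper's own proof: both give the same two arguments (Theorem \ref{AlgVarsWeightDec} for $B$, purity of the Euler class, then Proposition \ref{cincopuntotres}; alternatively, the mixed Hodge cdga of Theorem \ref{MHSenelmodelo} combined with Lemma \ref{MHSWeightdec}), just in the opposite order and with more detail than the paper's terse version. One small slip: in your positivity check you write that a weight-$0$ generator $w_x$ would force $x$ to have ``negative degree'', but what you mean is negative \emph{weight} (indeed $||w_x||=||x||+2k$, and positivity of the decomposition on $\cA$ gives $||x||\geq 0$, so $||w_x||\geq 2k>0$).
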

\begin{proof}
This follows from the proof of the previous theorem together with Lemma $\ref{MHSWeightdec}$. 
Alternatively, by Theorem \ref{AlgVarsWeightDec}, $B$ 
carries a positive weight decomposition.
Note that a key point in the proof of Theorem $\ref{MHSenelmodelo}$ is the fact that the Euler class
is pure of type $(k,k)$ in $H^{2k}(B;\QQ)$. In particular, the
hypotheses of Proposition $\ref{cincopuntotres}$ are satisfied.
\end{proof}

\bibliographystyle{amsalpha}
\bibliography{biblio-article}

\end{document}